\DeclareMathOperator{\diam}{diam}
\theoremstyle{plain}
\newtheorem{theorem}{Theorem}
\newtheorem{corollary}[theorem]{Corollary}
\newtheorem{claim}[theorem]{Claim}
\newtheorem{lemma}[theorem]{Lemma}
\newtheorem{proposition}[theorem]{Proposition}
\theoremstyle{definition}
\newtheorem{definition}[theorem]{Definition}
\newtheorem{remark}[theorem]{Remark}
\numberwithin{equation}{section}
\numberwithin{theorem}{section}
\newcommand{\sparse}{\text{sparse}}
\newcommand{\side}{\text{side}}
\newcommand{\rad}{\text{rad}}
\newcommand{\dist}{\text{dist}}
\newcommand{\RR}{\mathbb{R}}
\newcommand{\B}{\mathcal{B}}
\title{Quantitative metric density and connectivity for sets of positive measure}
\date{\today}
\author{Guy C. David}
\address{Department of Mathematical Sciences\\ Ball State University, Muncie, IN 47306}
\email{gcdavid@bsu.edu}
\date{\today}
\thanks{G. C. David was partially supported by the National Science Foundation under Grant No. DMS-2054004. The main results of the paper form part of the master's thesis of B. Oliva.}
\author{Brandon Oliva}
\address{Department of Mathematical Sciences\\ Ball State University, Muncie, IN 47306}
\email{brandon.oliva@bsu.edu}
\subjclass[2020]{30L99, 28A75}
\begin{document}

\begin{abstract}
We show that in doubling, geodesic metric measure spaces, (including, for example, Euclidean space) sets of positive measure have a certain large-scale metric density property. As an application, we prove that a set of positive measure in the unit cube of $\mathbb{R}^d$ can be decomposed into a controlled number of subsets that are ``well-connected'' within the original set, along with a ``garbage set'' of arbitrarily small measure. Our results are \textit{quantitative}, i.e., they provide bounds independent of the particular set under consideration.
\end{abstract}
\maketitle

\section{Introduction}

\subsection{Background: qualitative vs. quantitative results in analysis}
Many important theorems in analysis are ``qualitative'' or ``infinitesimal''. They assert that a certain desirable small-scale limiting behavior occurs, but do not provide any guarantee of a fixed scale at which such a behavior occurs up to a given error.

A basic example is Rademacher's theorem: a Lipschitz function from $\RR^d$ to $\RR$ is differentiable almost everywhere. While this is a wonderful result, neither the theorem nor its typical proofs (see, e.g., \cite{mattila, heinonen}) guarantee anything about how far one might have to zoom in at a generic point to find a scale where this Lipschitz function looks like an affine function with at most, say, 1\% error.

With completely different arguments, however, such a guarantee can be made. Let us say that a function $f$ is ``$\epsilon$-close to affine'' on a ball $B$ of radius $r$ if there is an affine function $A$ such that $\sup |f-A|\leq \epsilon r$ on $B$. A result of Dorronsoro \cite{dorronsoro} implies the following quantitative statement.\textit{ For each $d\in\mathbb{N}$ and $\epsilon>0$, there is an $r_0>0$ such that every $1$-Lipschitz $f\colon [0,1]^d\rightarrow\RR$ has a ball of radius at least $r_0$ on which it is $\epsilon$-close to being affine.} (See also expositions by Semmes \cite[B.29]{gromov} and Young \cite[Theorem 2.4]{young}.) 
In fact, more is true: in some sense ``most'' balls in $[0,1]^d$ have the property that $f$ is $\epsilon$-close to affine on them. Note that the parameter $r_0$ is a guarantee that can be made \textit{independent of the particular $1$-Lipschitz function } $f$.

There is by now a whole literature in quantitative analysis that is too large to survey here, including both new phenomena and quantitative analogs of classical results. In addition to the paper of Dorronsoro referenced above, some of the foundational texts are by Jones \cite{jonesTST} and David--Semmes \cite{davidsemmes}.

\subsection{The Lebesgue density theorem}
Another ``qualitative'' result in analysis, in the sense described above, is the Lebesgue density theorem. Using $\lambda$ to denote Lebesgue measure in $\mathbb{R}^n$, a version of the theorem is as follows:
\begin{theorem}[Lebesgue Density Theorem]
Let $E\subseteq \RR^d$ be measurable. Then at almost every point $x\in E$,
$$\lim_{r\rightarrow 0} \frac{\lambda(B(x,r)\cap E)}{\lambda(B(x,r))} =1.$$
\end{theorem}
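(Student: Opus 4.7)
The plan is to reduce the statement to showing that for each $\alpha \in (0,1)$, the set
\[ A_\alpha = \left\{ x \in E : \liminf_{r \to 0} \frac{\lambda(B(x,r) \cap E)}{\lambda(B(x,r))} < \alpha \right\} \]
has Lebesgue measure zero. Since the ratio is always at most $1$, the limsup side is automatic, so once we have this, letting $\alpha$ run through a sequence tending to $1$ expresses the ``bad set'' as a countable union of null sets and gives the theorem. A mild preliminary check, using that the liminf may be computed along rational radii and that $r \mapsto \lambda(B(x,r)\cap E)$ is upper semicontinuous in $x$ for fixed $r$, confirms that $A_\alpha$ is in fact measurable.

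To show $\lambda(A_\alpha) = 0$, I would argue by contradiction. Suppose $\lambda(A_\alpha) > 0$. Fix a small $\epsilon > 0$ to be chosen later. By outer regularity of $\lambda$, pick an open set $U \supseteq A_\alpha$ with $\lambda(U \setminus A_\alpha) < \epsilon \lambda(A_\alpha)$; since $A_\alpha \subseteq E$, this also gives $\lambda(U \setminus E) < \epsilon \lambda(A_\alpha)$. For each $x \in A_\alpha$, the definition of $A_\alpha$ provides arbitrarily small radii $r > 0$ with $B(x,r) \subseteq U$ and
\[ \lambda(B(x,r) \setminus E) > (1 - \alpha)\, \lambda(B(x,r)). \]
The collection of all such balls is a Vitali cover of $A_\alpha$, so the Vitali covering theorem yields a countable, pairwise disjoint subfamily $\{B(x_i, r_i)\}$ whose union covers $A_\alpha$ up to a $\lambda$-null set.

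Summing the displayed inequality and using disjointness together with $\bigcup_i B(x_i, r_i) \subseteq U$,
\[ (1-\alpha)\, \lambda(A_\alpha) \leq (1-\alpha) \sum_i \lambda(B(x_i, r_i)) \leq \sum_i \lambda(B(x_i, r_i) \setminus E) \leq \lambda(U \setminus E) < \epsilon\, \lambda(A_\alpha). \]
Choosing $\epsilon < 1-\alpha$ forces $\lambda(A_\alpha) = 0$, the desired contradiction. The only substantive ingredient here is the Vitali (or Besicovitch) covering theorem; I expect that step to be the main technical hurdle, while the rest is essentially bookkeeping with outer regularity. An alternative, more in keeping with the quantitative flavor the introduction is leading toward, would be to establish the weak-type $(1,1)$ bound for the Hardy--Littlewood maximal function and deduce the density theorem from Lebesgue's differentiation theorem applied to $\chi_E$; but for the bare qualitative statement that would be heavier machinery than the Vitali argument needs.
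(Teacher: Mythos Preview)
Your argument is the standard Vitali covering proof and is essentially correct. One small quibble: the measurability sketch is a bit hand-wavy (the map $x \mapsto \lambda(B(x,r)\cap E)$ is in fact continuous, not merely upper semicontinuous, since sphere boundaries are Lebesgue null), but this does not affect the logic, and in any case one could run the argument with outer measure and avoid the issue entirely.

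That said, there is nothing to compare here: the paper does not prove the Lebesgue Density Theorem. It is stated in the introduction purely as a classical ``qualitative'' result, serving as motivation for the quantitative metric density results (Theorem~\ref{thm:QMDT} and Corollary~\ref{cor:QMDT}) that follow. The paper simply cites \cite{heinonen} for the proof in the general doubling setting. So your proposal supplies an argument where the paper deliberately offers none.
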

In other words, at infinitesimally small scales, $E$ ``fills up'' nearly all the measure of a ball. We note that the same statement holds in arbitrary complete, doubling metric measure spaces; see below for the definitions of these terms and \cite{heinonen} for the result in this generality.

As a na\"ive quantitative analog of the Lebesgue density theorem, paralleling the quantitative Rademacher theorem mentioned above, one might hope for the following: 

\begin{equation}\label{eq:false}
\begin{split}
\text{For each } d\in\mathbb{N} \text{ and } \alpha,\epsilon>0\text{, there is an }r_0>0\text{ such that if } E\subseteq [0,1]^d \text{ has }\lambda(E) \geq \alpha,\\
\text{then there is a ball }B\text{ of radius }r\geq r_0\text{ such that }
\frac{\lambda(B \cap E)}{\lambda(B)} \geq 1-\epsilon.
\end{split}
\end{equation}

This statement turns out to be \textbf{false}, however, as we demonstrate with a straightforward counterexample in Section \ref{sec:counterexample}.

\subsection{Quantitative metric density}
To give a correct quantitative statement inspired by the Lebesgue density theorem, we view density in a metric rather than measure-theoretic sense.

\begin{definition}
We define the \textit{sparsity} of a set $E$ in a closed ball $B$ of radius $r$ by
\begin{equation*}
    \sparse(E,B) = \frac{\sup\{\dist(x,E\cap B): x \in B\}}{r}.
\end{equation*}
If $E\cap B = \emptyset$, we regard the sparsity $\sparse(E,B)$ as undefined.
\end{definition}

Thus, if $\sparse(E,B)=0$, then $\overline{E\cap B} = B$, although of course it may be that the measure of $E\cap B$ is $0$. If $\sparse(E,B)$ is small, we consider $E$ to be ``very dense'' in $B$.

In order to formulate the idea that a set has small sparsity at ``most'' locations and scales, we also require the notion of a ``multi-resolution family'' of balls in a metric space.

\begin{definition}
 Let $(X,d)$ be a metric space. Let $\{N_k:k=0,1,2,\dots\}$ be a family of $2^{-k}$-nets in $X$ satisfying $N_0\subseteq N_1 \subseteq N_2 \subseteq \dots$. (See Definition \ref{def:net}.)
 
 Given a constant $A\geq 1$, a \textit{multiresolution family} is the collection
    \begin{equation*}
        \mathcal{B} = \{B(x,A\cdot 2^{-k}) : x\in N_k, 0\le k < \infty\}
    \end{equation*}  
\end{definition}

Note that if $A\geq 1$, then the collection of all balls in $\mathcal{B}$ of fixed radius $A\cdot 2^{-k}$ forms a cover of $X$.

With these definitions, we can state a ``Quantitative Metric Density Theorem''. The terminology employed in the statement (e.g., ``doubling'', ``geodesic'') will be defined in Section \ref{sec:prelim}.
\begin{theorem}\label{thm:QMDT}
Let $(X,d,\mu)$ be a complete, geodesic, metric measure space equipped with a multiresolution family $\mathcal{B}$. Assume that $\mu$ is a C-doubling measure, $\diam{X}=1$, and $\mu(X)=1$.

Given $\epsilon>0$, there is a $K$ (depending only on $\epsilon$, $C$, and $A$) such that, for any set $E\subseteq X$, if we set
\begin{equation*}\label{B(E,e)}
    \mathcal{B}(E,\epsilon)=\{B\in\mathcal{B}:B\cap E\neq \emptyset, \sparse(E,B)\geq \epsilon\}.
\end{equation*}
then
\begin{equation}\label{eq:QMDTsum}
    \sum_{B \in \mathcal{B}(E,\epsilon)}\mu(B) \leq K.
\end{equation}
\end{theorem}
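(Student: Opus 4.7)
The plan is to show that each $B \in \mathcal{B}(E, \epsilon)$ contains a ``hole'' $B^{*} \subseteq B$ that is disjoint from $E$ and has $\mu$-measure comparable to $\mu(B)$, and then to organize these holes via a Whitney-type decomposition of $X \setminus E$. The total Carleson sum will then be controlled by $\mu(X \setminus E) \leq \mu(X) = 1$, up to constants depending only on $\epsilon$, $C$, and $A$.

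\emph{Step 1: Constructing the hole.} Given $B = B(c,r) \in \mathcal{B}(E, \epsilon)$, I would first pick $x \in B$ with $\dist(x, E \cap B) \geq \tfrac{3}{4}\epsilon r$, which is possible by the definition of sparsity. The naive candidate $B(x, \epsilon r/2)$ misses $E \cap B$ by the choice of $x$, but it may extend outside $B$ and meet $E \setminus B$. To fix this, I would use the geodesic hypothesis: let $x'$ be the point at distance $\min(\epsilon r/4, d(x,c))$ from $x$ along a geodesic from $x$ to $c$, and set $B^{*} := B(x', \epsilon r/4)$. The triangle inequality then gives both $B^{*} \subseteq B$ and $B^{*} \subseteq B(x, \epsilon r/2)$, so $B^{*} \cap E = B^{*} \cap E \cap B = \emptyset$. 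Doubling, applied via $B \subseteq B(x', 2r)$, yields $\mu(B^{*}) \geq c_0(\epsilon, C)\, \mu(B)$.

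\emph{Step 2: Whitney decomposition and counting.} Assuming WLOG that $E$ is closed (passing to the closure preserves the sparsity quantity), I would invoke the standard Whitney-type construction in doubling metric measure spaces to cover $X \setminus E$ by balls $W_j = B(w_j, r_j)$ with $r_j$ comparable to $\dist(w_j, E)$ and uniformly bounded multiplicity. To each $B$ I would assign a Whitney ball $W_{j(B)}$ containing the hole center $x'$. The main counting step is that each $W_j$ receives at most $N(\epsilon, C, A)$ balls under this map. Indeed, $\epsilon r/4 \leq \dist(x', E) \leq 2r$ and $\dist(x', E)$ is comparable to $r_j$, so the dyadic scale $r = A \cdot 2^{-k}$ is determined by $j$ up to a factor $O(1/\epsilon)$, leaving only $O(\log(1/\epsilon))$ possible values of $k$; at each such scale the $2^{-k}$-separation of $N_k$ bounds the number of admissible centers lying within $O(r)$ of $w_j$. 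Doubling at comparable scales further gives $\mu(B) \leq C'(\epsilon, C)\, \mu(W_j)$, so
\[
\sum_{B \in \mathcal{B}(E,\epsilon)} \mu(B) \leq N \cdot C'(\epsilon, C) \sum_j \mu(W_j) \leq N \cdot C'(\epsilon, C) \cdot C_W \cdot \mu(X) = K(\epsilon, C, A).
\]

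The main obstacle is Step 1: ensuring $B^{*} \subseteq B$ (rather than merely $B^{*} \subseteq 2B$) is where the geodesic hypothesis enters essentially. A sparsity point $x$ chosen near the boundary of $B$ would, in a non-geodesic setting, only yield a hole sticking partly outside $B$, and points of $E \setminus B$ just barely outside could invalidate the disjointness $B^{*} \cap E = \emptyset$; the geodesic allows us to ``pull $x$ back toward $c$'' by exactly the right amount to keep the hole safely inside $B$.
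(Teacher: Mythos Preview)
Your argument is correct, though the phrase ``passing to the closure preserves the sparsity quantity'' is not quite accurate: replacing $E$ by $\overline{E}$ can only \emph{decrease} sparsity, so $\mathcal{B}(\overline{E},\epsilon)$ may be strictly smaller than $\mathcal{B}(E,\epsilon)$. Fortunately your argument never actually uses this. What you need is only that the hole center $x'$ lies in $X\setminus\overline{E}$, and this follows from your Step~1 estimates: $\dist(x',E\cap B)\geq \epsilon r/2$ and, since $d(x',c)\leq r-\epsilon r/4$, also $\dist(x',E\setminus B)\geq \epsilon r/4$, so $\dist(x',\overline{E})\geq \epsilon r/4>0$. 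With this adjustment the Whitney decomposition of the open set $X\setminus\overline{E}$ does everything you want.

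Your route genuinely differs from the paper's. Both proofs share your Step~1 almost verbatim: the paper's Lemma~4.2 uses the same geodesic ``pull toward the center'' trick to locate a small ball inside $B$ that misses $E$. The divergence is in Step~2. The paper introduces the averaged quantity $d_E(B)=\mu(B)^{-1}\int_B \dist(x,E)/r\,d\mu$, proves the Carleson bound $\sum_{B\cap E\neq\emptyset} d_E(B)\mu(B)\leq C_X$ by Fubini and a geometric series in the scale index, and then invokes the Step~1 hole to get $d_E(B)\geq\delta$ whenever $\sparse(E,B)\geq\epsilon$. You instead package the same scale-counting combinatorially via a Whitney cover of $X\setminus\overline{E}$. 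The paper's integral argument is a bit more self-contained (no Whitney machinery needed) and yields the slightly stronger Proposition~4.9 as a byproduct; your approach is more geometric and makes the role of the complement $X\setminus E$ explicit. The two are morally equivalent: your ``$O(\log(1/\epsilon))$ scales hit each Whitney ball'' is the discrete shadow of the paper's geometric series $\sum_k \dist(x,E)/(A2^{-k})$.
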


Theorem \ref{thm:QMDT}  says that a set of positive measure must look very dense in ``most'' balls that intersect it, in a way which is \textit{independent of the particular set under consideration}.

For those unfamiliar with the sort of sum over balls of all scales in \eqref{eq:QMDTsum} (sometimes called a ``Carleson packing'' condition), we include the following small piece of intuition: Suppose we did not include the condition that $\sparse(E,B)\geq\epsilon$ in the sum in \eqref{eq:QMDTsum}. In other words, supposed we simply summed $\mu(B)$ over all balls $B\in\mathcal{B}$ intersecting $E$. In that case, the above sum would diverge (if $\mu(E)>0$) because
\begin{equation*}
    \sum_{B\in \mathcal{B}, B\cap E \neq \emptyset}\mu(B) = \sum_{k=0}^{\infty}\sum_{\substack{B\in \mathcal{B}\\ B\cap E \neq \emptyset\\ \rad(B)=A 2^{-k}}}\mu(B)
    \geq \sum_{k=0}^\infty \mu(E) = \infty.
\end{equation*}
Therefore, Theorem \ref{thm:QMDT} says vaguely that $E$ has sparsity $<\epsilon$ in ``most'' balls in $\mathcal{B}$ that intersect it. 

To relate this back to the discussion above, we note that Theorem \ref{thm:QMDT} will easily imply the following:

\begin{corollary}\label{cor:QMDT}
Let $(X,d,\mu)$ be a complete, geodesic metric measure space with $\mu$ a C-doubling measure. Suppose that $\diam{X}=1$ and $\mu(X)=1$.

Given $\alpha$,$\epsilon>0$, there is an $r_0>0$ (depending only on $\alpha, \epsilon, C$) such that if a Borel set $E\subset X$ has $\mu(E)\ge \alpha$, then there is a ball $B$ of radius $r\in [r_0,1]$ such that 
    \begin{equation}\label{sparse small}
       E\cap B \neq\emptyset \text{ and } \sparse(E,B) < \epsilon.
    \end{equation}
\end{corollary}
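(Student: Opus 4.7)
The plan is to derive Corollary \ref{cor:QMDT} directly from Theorem \ref{thm:QMDT} via a logarithmic pigeonhole in scales. First I would fix a multiresolution family $\mathcal{B}$ on $X$ with $A = 1$ (nested $2^{-k}$-nets can be constructed greedily in any doubling space), and apply Theorem \ref{thm:QMDT} to obtain a constant $K = K(\epsilon, C)$ that bounds $\sum_{B \in \mathcal{B}(E,\epsilon)} \mu(B)$ uniformly in $E$.

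The main step is a contrapositive argument. I would suppose that every ball $B \subseteq X$ of radius $r \in [r_0, 1]$ with $E \cap B \neq \emptyset$ satisfies $\sparse(E, B) \geq \epsilon$; in particular, every ball in $\mathcal{B}$ at a scale $2^{-k} \in [r_0, 1]$ that meets $E$ lies in $\mathcal{B}(E, \epsilon)$. Because $N_k$ is a $2^{-k}$-net, the balls $\{B(x, 2^{-k}) : x \in N_k\}$ cover $X$ and hence cover $E$, so at each such scale
\[ \sum_{\substack{B \in \mathcal{B},\ \rad(B) = 2^{-k} \\ B \cap E \neq \emptyset}} \mu(B) \geq \mu(E) \geq \alpha. \]
Summing this estimate over the $\lfloor \log_2(1/r_0) \rfloor + 1$ scales with $2^{-k} \in [r_0, 1]$ and invoking Theorem \ref{thm:QMDT} gives $\alpha (\lfloor \log_2(1/r_0) \rfloor + 1) \leq K$; choosing $r_0 < 2^{-K/\alpha}$, a quantity depending only on $\alpha, \epsilon, C$, contradicts this inequality and hence produces the desired ball.

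I do not expect a substantive obstacle: the analytic content has been packaged entirely into Theorem \ref{thm:QMDT}, and what remains is essentially the Carleson-style logarithmic pigeonhole already hinted at in the discussion following the theorem. The only point that needs care is confirming the existence of a multiresolution family with $A = 1$ in this setting, which is a standard consequence of the doubling property.
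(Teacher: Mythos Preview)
Your proposal is correct and follows essentially the same approach as the paper: fix $A=1$, invoke Theorem \ref{thm:QMDT} for the bound $K$, and then use the logarithmic pigeonhole in scales (each scale contributing at least $\mu(E)\geq\alpha$ to the Carleson sum) to force a good ball at some scale $2^{-N}$ with $N\leq K/\alpha$. The only cosmetic difference is that the paper phrases this directly by defining $N$ as the first scale admitting a good ball and bounding $N$, whereas you run the same computation as a contrapositive; the resulting $r_0$ is the same.
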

In other words, we guarantee a large scale (with size independent of $E$) in which $E$ is very dense.

\begin{remark}

The main ideas behind Theorem \ref{thm:QMDT} and Corollary \ref{cor:QMDT} are not really new, although we believe our presentation of the result has new features. The key intermediate step in the proof (Proposition \ref{prop 4.9}) can essentially be found in \cite[Section IV.1.2]{davidsemmes} and \cite[Lemma 1.4]{schul}, although stated with more restrictive assumptions.

Thus, our main purpose in stating and proving Theorem \ref{thm:QMDT} is the application to the original Theorem \ref{thm:connected} below. As a secondary concern, we take the opportunity to provide a general proof of the result in the setting of abstract metric measure spaces.
    
\end{remark}

\begin{remark}
The assumptions that $\diam X=1$ and $\mu(X)=1$ in Theorem \ref{thm:QMDT} and Corollary \ref{cor:QMDT} are not really restrictive, since one can always normalize to these parameters. See Remark \ref{rmk:diameter} for an example.
\end{remark}

\subsection{Quantitative connectivity}
Our main new result is about sets of positive measure in the unit cube of $\mathbb{R}^d$. We show that such a set can be decomposed, up to a ``garbage set'' of small measure, into pieces that are ``well-connected'' within the larger set, in a way we now make precise.

Given $\eta>0$, an \textit{$\eta$-chain} in a metric space is a finite list of points $(x_0, \dots, x_m)$ such that $d(x_i, x_{i+1})<\eta$ for each $i\in\{0,\dots,m-1\}$. We say that the chain is ``from $x$ to $y$'' if $x=x_0$ and $y=x_m$. The \textit{length} of the $\eta$-chain is the sum
$ \sum_{i=0}^{m-1} d(x_i,x_{i+1}).$

\begin{definition}\label{def:well-connected}
    Let $E\subset [0,1]^{d}$, and let $F\subset E$. We say that $F$ is \textit{$\delta$-well-connected in $E$} if for every $x,y\in F$, there is a $\delta|x-y|$-chain from $x$ to $y$ in $E$ with length at most $(1+\delta)|x-y|$.
\end{definition}
Thus, if $F$ is well-connected in $E$, then any pair of points in $F$ can be joined by a discrete path in $E$ whose \textit{steps are small} (compared to the distance between the endpoints) and whose \textit{length is almost as small as possible}.

We show that all sets of positive measure have ``large'' pieces that are well-connected within the original set, in a quantitative way.

\begin{theorem}\label{thm:connected}
    Given $d\in\mathbb{N}$, $\alpha>0$, and $\delta>0$, there is an $M\in\mathbb{N}$ (depending only on $d,\alpha,\delta$) with the following property: 
    If $E\subset [0,1]^d$, then there are sets $F_{1},F_{2},...,F_{M},Z\subset E$ such that:
    \begin{enumerate}
       \item $E=F_{1}\cup F_{2}\cup...\cup F_{M}\cup Z$.
        \item $\lambda(Z)<\alpha$.
        \item For each $i\in \{1,2,...,M\}$, $F_{i}$ is $\delta$-well-connected in $E$.
     
    \end{enumerate}
\end{theorem}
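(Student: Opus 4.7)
The plan is to apply Theorem \ref{thm:QMDT} to $([0,1]^d, \lambda)$ with a multiresolution family $\mathcal{B}$, use the resulting Carleson packing of ``bad'' balls to extract a small garbage set, and partition the remainder into pieces each sitting inside a good $\mathcal{B}$-ball of comparable radius. Fix $\epsilon = \epsilon(\delta, d) > 0$ small (to be specified) and obtain $\sum_{B \in \mathcal{B}(E, \epsilon)} \lambda(B) \leq K$ with $K = K(\epsilon, d)$.

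The engine is a local chain lemma: if $B \in \mathcal{B}$ has $\sparse(E, B) < \epsilon$ and $x, y \in E \cap B$ have $|x-y|$ comparable to $r(B)$, then by subdividing the Euclidean segment $[x, y]$ into $N = \lceil 1/\delta \rceil$ equal pieces and choosing, for each subdivision point $p_i$, a point of $E$ within $\epsilon r(B)$ of $p_i$ (possible since the sparsity bound guarantees every point of $B$ is within $\epsilon r(B)$ of $E \cap B$), one produces a $\delta|x-y|$-chain in $E$ from $x$ to $y$ of length at most $(1+\delta)|x-y|$, provided $\epsilon \leq c_d \delta^2$. The task thus reduces to partitioning $E$, modulo a garbage set of measure $<\alpha$, into pieces such that any two points in the same piece lie in a common good $\mathcal{B}$-ball of comparable radius.

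The partition begins with a scale-selection step. Splitting the Carleson sum by scale gives $\sum_{k} S_k \leq K$, where $S_k$ is the total measure of bad balls of radius $A \cdot 2^{-k}$; pigeonhole then yields a scale $k_0 \leq 2K/\alpha$ with $S_{k_0} \leq \alpha/2$. Set $r_0 = A \cdot 2^{-k_0}$, absorb the bad balls at scale $k_0$ into a partial garbage $Z_0$ (of measure $\leq \alpha/2$), and index the good balls at scale $k_0$ as $B_1, \dots, B_{M_0}$ with $M_0 = O(r_0^{-d})$ depending only on $\alpha, \delta, d$. The tentative pieces $F_i^{(0)} := E \cap B_i$ (with overlaps resolved by an arbitrary assignment) are handled by the chain lemma for pairs $x,y$ with $|x-y|$ comparable to $r_0$.

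The main obstacle is pairs $x, y \in F_i^{(0)}$ with $|x-y| \ll r_0$, for which the chain lemma needs a good ball of radius $\sim |x-y|$ rather than $r_0$. The remedy is to iterate: inside each $B_i$, reapply Theorem \ref{thm:QMDT} to $E \cap B_i$ (using that its constants depend only on $\epsilon, C, A$, not on the underlying set) and refine the partition at a new preferred scale $r_1 \ll r_0$ chosen by the same pigeonhole. With a geometrically-decaying garbage budget at each iteration (allotting $\alpha/2^{j+1}$ at step $j$ to keep $\lambda(Z) < \alpha$) and $O(\log(1/\delta))$ effective iterations, every pair $x, y$ ending up in a common final piece is contained in a good ball of radius comparable to $|x-y|$, so the chain lemma applies. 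Each iteration multiplies the piece count by a factor depending only on $\alpha, \delta, d$, giving the final bound $M = M(\alpha, \delta, d)$; the delicate point is arranging the iteration (for instance using overlapping net-based covers at each scale) so that pairs near piece boundaries are either joined at a coarser scale or moved to the garbage, and so that the aggregate garbage remains under $\alpha$.
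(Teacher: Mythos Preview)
Your local chain lemma and the use of Theorem~\ref{thm:QMDT} to control ``bad'' balls are both correct and match the paper. The gap is in the partitioning step. Your iterative scale-refinement scheme cannot terminate in a number of steps depending only on $d,\alpha,\delta$: after any finite number $J$ of iterations you have pieces living at some finest scale $r_J$, but a piece $F$ of positive measure will contain pairs $x,y$ with $|x-y|$ arbitrarily small compared to $r_J$, and for such pairs the chain lemma requires a good ball of radius $\sim |x-y|$ that your procedure has never examined. The assertion that ``$O(\log(1/\delta))$ effective iterations'' suffice is unsupported and false; $\delta$ governs the quality of the chain, not the range of scales at which pairs of points in $E$ can occur. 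Shrinking the garbage budget geometrically does not help, since the number of pieces at step $j$ blows up (roughly like $r_j^{-d}$, with $r_j$ forced to shrink rapidly by the shrinking budget), so the product over $j$ diverges.

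The paper avoids this by handling all scales at once. Instead of choosing a single preferred scale and iterating, it uses the full Carleson packing to show (Proposition~\ref{prop 6.4}) that, outside a small garbage set, every point of $E$ lies in at most $N$ ``bad'' tripled dyadic cubes $7Q$. It then invokes a coding argument of Jones (Lemma~\ref{lemma 6.6}): colour each point $x\in G=E\setminus Z$ by a word of length $N$ over an alphabet of size $3^d+1$, where the $i$th letter records which cell of $3Q$ contains $x$ for the (unique, if any) bad cube $Q$ in the $i$th layer of a decomposition of the bad family into $N$ ``disjoint-triples'' layers. Two points with the same colour cannot have a bad cube as the \emph{minimal} dyadic $Q$ with $x,y\in 3Q$; since that minimal cube automatically has $\side(Q)\le 2|x-y|$, the chain lemma applies at the correct scale for every pair in the same colour class. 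This is the missing idea in your outline: a mechanism that, in one stroke and with $M=(3^d+1)^N$ pieces, guarantees a good cube of the right size for \emph{every} pair, rather than chasing scales one at a time.
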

The ``quantitative'' aspect here is the fact that the number $M$ of well-connected subsets depends only on the given parameters, and \textit{not} on the original set $E$.

\begin{remark}
The set $E$ is not assumed to be measurable in Theorem \ref{thm:connected}. However, the set $Z$ can always be taken to be the intersection of $E$ with a measurable subset of $\mathbb{R}^d$ of measure $<\alpha$. Thus, if $E$ is not measurable, then conclusion (2) of the theorem can be interpreted in the sense of Lebesgue outer measure.
\end{remark}

The main steps in the proof of Theorem \ref{thm:connected} are as follows: We apply Theorem \ref{thm:QMDT} (suitably reinterpreted for dyadic cubes rather than balls) to show that in ``most'' dyadic cubes that touch $E$, the sparsity of $E$ is small. (In fact, it is important to use expansions of dyadic cubes, but we elide this for now.) This will imply that most points of $E$, outside of a small garbage set $Z$, lie in a controlled number of ``bad'' dyadic cubes in which $E$ is not very dense.

The next step is a ``coding'' argument, originally due to Jones \cite{jones_1}, to separate $E\setminus Z$ into a controlled number of sets $F_i$ with the following property: If two points $x,y\in E$ are in the same $F_i$, then there is a cube containing them of size $\approx |x-y|$ in which $E$ is very dense. This coding argument is Lemma \ref{lemma 6.6}; we take this opportunity to give a different proof than the ones presented in \cite{jones_1} and \cite{david}.

Finally, we complete the proof by showing that each set $F_i$ is well-connected in $E$.

\begin{remark}
Theorem \ref{thm:connected} bears some resemblance to the ``Checkerboard Theorem'' \cite[Theorem 1.3]{joneskatzvargas} of Jones--Katz--Vargas, but it seems that neither result implies the other. The result of \cite{joneskatzvargas} shows that a set $B$ of positive measure in the unit cube admits a (quantitatively) large subset $A$ that is ``checkerboard connected'' within the original set. This means that any pair of points in $A$ can be joined by a finite sequence of jumps parallel to the coordinate axes whose endpoints lie in $B$, and whose total length is controlled.

In Theorem \ref{thm:connected}, the jumps are not necessarily parallel to the coordinate axes (otherwise the length bound in Definition \ref{def:well-connected} would be impossible). However, the jumps in Theorem \ref{thm:connected} are always \textit{small} compared to the diameter of the path, whereas in \cite[Theorem 1.3]{joneskatzvargas} this is not required. 

The proofs are also different; for instance, \cite{joneskatzvargas} makes essential use of maximal function bounds, and we do not.
\end{remark}

\begin{remark}
It seems likely that a version of Theorem \ref{thm:connected} is true in the generality of doubling, geodesic metric spaces. One main ingredient, Theorem \ref{thm:QMDT}, is already stated and proven in this setting. 

The main change needed to push the argument for Theorem \ref{thm:connected} to this setting would be a version of dyadic cubes in abstract metric spaces. Such a construction exists \cite{christ, hytonen}, but in the interest of keeping the present paper relatively streamlined we do not attempt to give the most general possible result here.
\end{remark}

\subsection{Outline of the paper}
Basic definitions and notations are explained in Section \ref{sec:prelim}. Section \ref{sec:counterexample} contains a counterexample to the quantitative measure-theoretic density statement \eqref{eq:false}. Section \ref{sec:QMDT} contains the proofs of Theorem \ref{thm:QMDT} and Corollary \ref{cor:QMDT}, and Section \ref{sec:connectivity} contains the proof of Theorem \ref{thm:connected}.

\section{Preliminaries}\label{sec:prelim}
The section contains some additional definitions,  notation, and basic facts used in the paper.

\begin{definition}
    A \textit{metric measure space} is a triple consisting of a space $X$, metric $d$, and (outer) measure $\mu$, denoted $(X,d,\mu)$, with an associated $\sigma$-algebra of measurable sets.
\end{definition}
We use $B(x,r)$ to denote the closed ball of radius $r$ centered at $x$, i.e., $\{y\in X: d(x,y)\leq r\}.$

\begin{definition}
    Let $(X,d)$ be a metric measure space and $C>0$. A non-zero Borel regular measure $\mu$ on $X$ is called \textit{doubling} (or \textit{$C$-doubling} to emphasize the constant) if it assigns finite measure to every ball and for each ball $B(x,r) \subset X$ we have
    \begin{equation*}
        \mu(B(x,2r)) \le C\cdot\mu(B(x,r)).
    \end{equation*}
\end{definition}

Of course, Lebesgue measure $\lambda$ on $\mathbb{R}^n$ is an example of a doubling measure.

\begin{definition}
    Let $(X,d)$ be a metric space. A metric space is said to be a \textit{doubling metric space} if there is an $N \in \mathbb{N}$ such that for every ball $B(x,2r) \subset X$, there are $N$ balls $B(x_k,r)$ such that 
    \begin{equation*}
        B(x,2r) \subseteq \bigcup_{k=1}^{N} B(x_k,r).
    \end{equation*}
\end{definition}
In other words, a metric space is a doubling metric space if every ball can be covered by a controlled amount of balls of half its original radius. If $(X,d,\mu)$ is a metric measure space with $\mu$ a doubling measure, then $(X,d)$ is a doubling metric space; see \cite[p. 82]{heinonen}.

\begin{definition}\label{def:net}
    Let $(X,d)$ be a metric space. Given $s>0$, an \textit{$s$-net} is a set $N \subset X$ that satisfies the following:
    \begin{enumerate}
        \item If $x,y\in N$ and $x\neq y$, then $d(x,y)\ge s$.
        \item If $z\in X$, then there is an $x\in N$ such that $d(x,z)<s$.
    \end{enumerate}
\end{definition}
We say that a set is \textit{$s$-separated} if only the first condition above is satisfied.
\begin{definition}
    Let $(X,d_X)$, $(Y,d_Y)$ be two metric spaces, and let $f:X\rightarrow Y$ be a function. We call $f$ an \text{isometric embedding} if for every $x_1,x_2\in X$
    \begin{equation*}
        d_Y(f(x_1),f(x_2))=d_X(x_1,x_2).
    \end{equation*}
\end{definition}
\begin{definition}
    Let $(X,d)$ be a metric space. We say $X$ is \textit{geodesic} if for every $x,y \in X$, there is an isometric embedding $f:[0,d(x,y)]\rightarrow X$ such that $f(0)=x$ and $f(d(x,y))=y$. We call this $f$ a geodesic.
\end{definition}

Our basic examples of doubling, geodesic spaces are $\mathbb{R}^d$ and $[0,1]^d$ with the Euclidean metric. In these settings, we will also need the standard dyadic cube decomposition:
\begin{definition}
    Let $a_1,a_2,...a_d$ be integers between 0 and $2^{-k}-1,k\in\mathbb{N}\cup\{0\}$. We define a \textit{dyadic cube} in $[0,1]^{d}$ by
    \begin{equation*}
        Q=[a_1\cdot 2^{-k},(a_1+1)\cdot 2^{-k})\times[a_2\cdot 2^{-k},(a_2+1)\cdot 2^{-k})\times...\times [a_d\cdot 2^{-k},(a_d+1)\cdot 2^{-k}).
    \end{equation*}
    One can triple the dyadic cube by
    \begin{equation*}
        3Q=[(a_1-1)\cdot 2^{-k},(a_1+2)\cdot 2^{-k})\times[(a_2-1)\cdot 2^{-k},(a_2+2)\cdot 2^{-k})\times...\times [(a_d-1)\cdot 2^{-k},(a_d+2)\cdot 2^{-k}).
    \end{equation*}
By analogy, we can define $5Q$, $7Q$, etc.

If $d=1$, we refer to ``dyadic intervals'' rather than dyadic cubes.
\end{definition}

The side length of a cube $Q$ is written $\side(Q)$. We denote the set of dyadic cubes in $[0,1]^d$ of side length $2^{-k}$ by $\Delta_k$, with $d$ understood from context, and we set $ \Delta = \bigcup_{k=0}^{\infty}\Delta_{k}.$

A \textit{child} of a dyadic cube $Q$ is a dyadic cube $R\subset Q$ with $\side(R) = \frac12 \side(Q)$.

If $Q$ is a dyadic cube in $\mathbb{R}^d$, then $3Q$ is the union of $3^d$ disjoint dyadic cubes of the same side length as $Q$. We refer to these as the \textit{cells} of $3Q$.

A basic fact about dyadic cubes is that, if $Q,R\in \Delta$, then either $Q\cap R =\emptyset$, $Q\subset R$, or $R\subset Q$.

\section{Counterexample to na\"ive quantitative Lebesgue density}\label{sec:counterexample}
Here, we show by a simple $1$-dimensional construction that the quantitative measure-theoretic density statement \eqref{eq:false} cannot hold.

 Let $d=1$, $\alpha =\frac{1}{2}$, and $\epsilon=\frac{1}{10}$. Let $r_0 \in (0,1)$ and $k\in\mathbb{N}$ be the smallest integer such that $2^{-k}\leq r_0$.\\

Let $E_k\subseteq [0,1]$ be defined as:
\begin{equation*}
    E_k=\left(0,\frac{1}{2^{k+2}}\right)\cup \left(\frac{2}{2^{k+2}},\frac{3}{2^{k+2}}\right)\cup...\cup\left(\frac{2^{k+2}-2}{2^{k+2}},\frac{2^{k+2}-1}{2^{k+2}}\right).
\end{equation*}
In other words, $E_k$ is the union of $2^{k+1}$ intervals each of measure $2^{-(k+2)}$, each interval separated by a length of $2^{-(k+2)}$, and the union is contained in $[0,1]$. (See Figure \ref{fig:counter} below.) Of course, $\lambda(E_k)=\frac12 = \alpha$ for all $k$.

\begin{center}
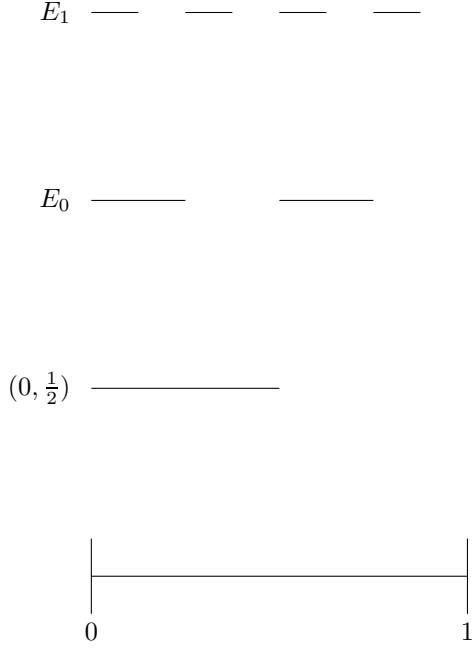
\begin{figure}
\begin{tikzpicture}[scale =5]
    \draw (0,0) -- (1,0); 
    \foreach \x in {0,...,1} {
        \draw (\x,0.1) -- (\x,-0.1) node[below] {\x}; 
    }

    \draw (0,0.5) -- node[left=40pt]{$(0,\frac{1}{2})$} (0.5,0.5);
        
    \draw (0,1) -- node[left = 22pt]{$E_0$} (0.25,1);
    \draw (0.5,1) -- (0.75,1);

    \draw (0,1.5) -- node[left = 13pt]{$E_1$} (0.125,1.5);
    \draw (0.25,1.5) -- (0.375,1.5);
    \draw (0.5,1.5) -- (0.625,1.5);
    \draw (0.75,1.5) -- (0.875,1.5);
    
\end{tikzpicture}

\caption{The sets $E_k$ for $k=0,1$.}\label{fig:counter}
\end{figure}
\end{center}

Note that we have,
\begin{equation*}
    \lambda(E_k)=2^{k+1}\cdot2^{-(k+2)}=\frac{1}{2}.
\end{equation*}

Let $I$ be a dyadic interval in $[0,1]$ such that $2^{-(k+1)}\le \lambda(I) \le 1$. Then, observe that
\begin{equation}\label{Ek I measure}
    \lambda(E_k \cap I) = \frac{1}{2}\lambda(I).
\end{equation}

Take any $x \in [0,1]$ and $r\in [r_0,1]$. We will show that 
\begin{equation*}
   \frac{\lambda(E_k\cap [x-r,x+r])}{\lambda([x-r,x+r])}= \frac{\lambda(E_k\cap B(x,r))}{\lambda(B(x,r))} < 1-\epsilon = 0.9,
\end{equation*}
thus disproving \eqref{eq:false}.

\begin{claim}
    There is a dyadic interval $I\subseteq [x-r,x+r]\cap [0,1]$ such that $\frac{r}{2} \le \lambda(I) \le 2r$
\end{claim}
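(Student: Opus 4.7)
The plan is to locate a dyadic interval of length close to $r$ inside $J := [x-r,x+r] \cap [0,1]$, via a two-case analysis depending on whether the interval $[x-r,x+r]$ is cut off by the boundary of $[0,1]$. First I note that since $x \in [0,1]$ and $r \le 1$, the length $L := \lambda(J)$ always lies in $[r, 2r]$. I will then fix $j$ to be the unique nonnegative integer with $2^{-j} \in (r/2, r]$; such a $j$ exists because this half-open interval has ratio $2$. It suffices to produce a dyadic interval of length $2^{-j}$ inside $J$, since any such interval automatically satisfies the desired bound $\lambda(I) \in [r/2, 2r]$.

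If $L \ge 2r$, I invoke the elementary fact that any interval of length at least $2 \cdot 2^{-j}$ must contain one of the dyadic intervals of length $2^{-j}$ that partition $\mathbb{R}$; this applies here because $L \ge 2r \ge 2 \cdot 2^{-j}$, finishing this case.

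The main obstacle is the complementary case $L < 2r$, in which the counting argument above gives no guarantee: $J$ may be only marginally longer than a single dyadic interval of length $2^{-j}$ and could be positioned so as to contain none of them. The observation that unlocks this case is that $L < 2r$ forces $[x-r, x+r]$ to extend past $[0,1]$, so $J$ must be flush with $0$ or with $1$. Without loss of generality $J = [0, L]$; then $2^{-j} \le r \le L$, so the dyadic interval $[0, 2^{-j}]$ sits inside $J$, and the symmetric argument handles $J = [1-L, 1]$.
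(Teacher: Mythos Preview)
Your argument is correct, but it takes a longer route than the paper's. Both proofs begin by choosing $j$ so that $2^{-j}\in(r/2,r]$. The paper then finishes in one line: since $x\in[0,1]$, simply take $I$ to be the dyadic interval of $[0,1]$ of length $2^{-j}$ that contains $x$; this $I$ lies in $[0,1]$ by construction, and lies in $[x-r,x+r]$ because it contains $x$ and has length $2^{-j}\le r$. Your proof never uses the fact that $x\in[0,1]$ to pin down a specific dyadic interval, and so you are forced into a two-case analysis on whether $[x-r,x+r]$ is clipped by the boundary of $[0,1]$, invoking a pigeonhole-type fact in the unclipped case and exploiting flushness with an endpoint in the clipped case. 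Your approach does have the minor advantage of not depending on the location of $x$ within $J$, but in this context that generality is not needed, and the paper's one-step argument is cleaner.
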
 
\begin{proof}
Let $j \in \mathbb{N}$ be the first integer where $2^{-j}\le r$. Let $I$ be a dyadic interval in $[0,1]$ of length $2^{-j}$ containing $x$. Note that $I\subset [x-r,x+r]$, so clearly $\lambda(I)\le 2r$.

If $2^{-j} < \frac{r}{2}$, then we have
\begin{equation*}
    2^{-(j-1)}=2^{-j+1}< r.
\end{equation*}
This contradicts our choice of $j$. Therefore,
\begin{equation*}
    \frac{r}{2}\leq\lambda(I)=2^{-j}<r\leq 2r.
\end{equation*}
\end{proof}

So since $\frac{r}{2} \le \lambda(I) \le 2r$, observe that
\begin{equation*}
    2^{-(k+1)}\le \frac{r_0}{2}\le \frac{r}{2}\le \lambda(I)\le 2r. 
\end{equation*} 
Therefore by \eqref{Ek I measure} above,
\begin{equation*}
    \lambda(E_k\cap I)=\frac{1}{2}\lambda(I).
\end{equation*}
So,
\begin{equation*}
    \lambda(E_k\cap [x-r,x+r])\leq 2r - \frac{1}{2}\lambda(I)\leq 2r-\frac{r}{4}=\frac{7}{4}r.
\end{equation*}
Hence,
\begin{equation*}
    \frac{\lambda(E_k\cap [x-r,x+r])}{\lambda([x-r,x+r])}\le \frac{7}{4}r\cdot\frac{1}{2r} = \frac{7}{8} < 1-\frac{1}{10}=\frac{9}{10}.
\end{equation*}

Thus, statement \eqref{eq:false} fails in general.

\section{Quantitative metric density}\label{sec:QMDT}
Our goal in this section is to prove Theorems \ref{thm:QMDT} and Corollary \ref{cor:QMDT}.

The following proposition guarantees the existence of nested families of nets, and therefore of multiresolution families, in any metric space. It is well-known, and we omit the standard proof using Zorn's lemma.

\begin{proposition}
Let $(X,d)$ be a metric space. Then there is a sequence of $2^{-k}$-nets $N_k$ in $X$ such that $N_0 \subseteq N_1 \subseteq N_2 \subseteq \dots$  
\end{proposition}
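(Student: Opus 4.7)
The plan is to construct the nets $N_k$ recursively in $k$, at each stage extending the previously built net to a maximal $2^{-(k+1)}$-separated superset via Zorn's lemma, and then checking that maximality forces the covering property of a net.

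First I would build $N_0$: consider the poset of all $1$-separated subsets of $X$, ordered by inclusion. The empty set shows it is nonempty, and any chain $\{S_\alpha\}$ has $\bigcup_\alpha S_\alpha$ as an upper bound (still $1$-separated, since any two distinct points of the union lie in a common $S_\alpha$). Zorn's lemma produces a maximal element $N_0$. The separation condition in Definition \ref{def:net} is immediate. For the covering condition, given any $z \in X$, either $z \in N_0$, or $N_0 \cup \{z\}$ is strictly larger than $N_0$ and hence not $1$-separated, so some $x \in N_0$ satisfies $d(x,z) < 1$. Thus $N_0$ is a $1$-net.

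Inductively, assume $N_k$ has been constructed as a $2^{-k}$-net. Since $N_k$ is in particular $2^{-(k+1)}$-separated, the collection
\begin{equation*}
\mathcal{F}_{k+1} = \{ S \subseteq X : N_k \subseteq S \text{ and } S \text{ is } 2^{-(k+1)}\text{-separated}\}
\end{equation*}
is nonempty. As above, it is closed under unions of chains, so Zorn's lemma produces a maximal element $N_{k+1}$. By construction $N_k \subseteq N_{k+1}$ and $N_{k+1}$ is $2^{-(k+1)}$-separated; maximality gives the covering property exactly as in the base case: if no $x \in N_{k+1}$ satisfies $d(x,z) < 2^{-(k+1)}$, then $N_{k+1} \cup \{z\}$ would still lie in $\mathcal{F}_{k+1}$ (it still contains $N_k$ and is still $2^{-(k+1)}$-separated), contradicting maximality. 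So $N_{k+1}$ is a $2^{-(k+1)}$-net containing $N_k$. Iterating produces the desired nested sequence.

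There is no serious obstacle; the only mild subtlety is ensuring the nestedness by working with the poset of separated supersets of $N_k$ (rather than applying Zorn to all separated sets and trying to match them up afterwards), and recognizing that a $2^{-k}$-net is automatically $2^{-(k+1)}$-separated so that $\mathcal{F}_{k+1}$ is nonempty at every inductive step.
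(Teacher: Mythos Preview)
Your proof is correct and is exactly the standard Zorn's lemma argument that the paper has in mind; the paper itself omits the proof, merely noting that it follows from Zorn's lemma, so there is nothing to compare beyond confirming that your recursive construction of maximal $2^{-(k+1)}$-separated supersets is precisely that argument.
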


We also need the following basic consequence of the doubling condition.
\begin{lemma}\label{doubling property}
    Let $(X,d)$ be a doubling metric space, and let $N$ be its doubling constant. Let $\mathcal{B}$ be a multiresolution family with constant $A>0$. Then there is a constant $M>0$, depending only on $N$ and $A$, such that for every $x\in X$ and $k\in \{0,1,2...\}$, at most $M$ balls from $\mathcal{B}$ of radius $A2^{-k}$ can contain $x$.
\end{lemma}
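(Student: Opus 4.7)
The plan is to reduce the count of balls containing $x$ to a bound on the cardinality of a $2^{-k}$-separated subset of a fixed ball, and then apply iterated doubling.

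First, I would note that any ball $B(y,A\cdot 2^{-k})\in\mathcal{B}$ of radius $A\cdot 2^{-k}$ containing $x$ has its center $y\in N_k$ and satisfies $d(x,y)\leq A\cdot 2^{-k}$. Thus all such centers lie inside $B(x,A\cdot 2^{-k})$. Since $N_k$ is a $2^{-k}$-net, these centers are pairwise $2^{-k}$-separated, and distinct centers produce distinct balls in $\mathcal{B}$. Therefore $M$ will be an upper bound on the cardinality of any $2^{-k}$-separated subset of $B(x,A\cdot 2^{-k})$.

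Second, I would iterate the doubling condition to cover $B(x,A\cdot 2^{-k})$ by small balls. Each application of doubling refines a cover by balls of radius $r$ into one by $N$ balls of radius $r/2$; after $j$ iterations, any ball of radius $R$ admits a cover by $N^j$ balls of radius $R/2^j$. I would choose the smallest $j$ with $2^{-j}A\leq 1/3$, namely $j=\lceil\log_2(3A)\rceil$, so that $B(x,A\cdot 2^{-k})$ is covered by $N^j$ balls of radius at most $2^{-k}/3$. Each such covering ball has diameter at most $\tfrac{2}{3}\cdot 2^{-k}<2^{-k}$ and therefore contains at most one point of any $2^{-k}$-separated set. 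Setting $M:=N^{\lceil\log_2(3A)\rceil}$, which depends only on $N$ and $A$, completes the proof.

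There is no significant obstacle here: the argument is a routine use of the doubling property. The only minor point to keep track of is choosing the covering radius strictly smaller than $2^{-k}/2$ (for instance $2^{-k}/3$), so that two points separated by exactly $2^{-k}$ cannot simultaneously lie in one covering ball; any such sub-halving constant would work and only affects the final value of $M$.
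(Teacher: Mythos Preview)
Your proposal is correct and follows essentially the same route as the paper: reduce to bounding a $2^{-k}$-separated set of centers inside $B(x,A\cdot 2^{-k})$, then iterate doubling to cover by balls of radius below $\tfrac{1}{2}\cdot 2^{-k}$ so that each covering ball meets at most one center. Your extra care in taking the covering radius $\leq 2^{-k}/3$ (and the explicit constant $M=N^{\lceil \log_2(3A)\rceil}$) is a harmless refinement of the paper's ``radius smaller than $\tfrac{1}{2}\cdot 2^{-k}$''; the one throwaway remark that ``distinct centers produce distinct balls'' is not literally needed, since bounding the number of centers already bounds the number of balls.
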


\begin{proof}
     Fix $x\in X$ and let $k\in \{0,1,2,...\}$. Let $\{B_1,B_2,B_3,...\}$ be the balls of radius $A\cdot 2^{-k}$ in $\mathcal{B}$ containing $x$, where $B_i=B(x_i,A\cdot 2^{-k})$ for $i\in \{1,2,...\}$ and some $k\in \{1,2,...\}$. Thus, the points $x_i$ are $2^{-k}$-separated and all contained in $B(x,A2^{-k})$. 

Iterating the doubling property of $(X,d)$ tells us that, given $p\in\mathbb{N}$, the ball $B(x, A2^{-k})$ can be covered by at most $N^{p}$ balls of radius $\frac{1}{2^{p}}\cdot A\cdot2^{-k}$.

Therefore, $B(x,A2^{-k})$ can be covered by at most $M$ balls of radius smaller than $\frac{1}{2}\cdot 2^{-k}$ for some $M>0$ depending only on $N$ and $A$. Each such ball contains at most one element of the set $\{x_1,x_2,...\}$, so there at most $M$ distinct elements in this set.
\end{proof}

In order to prove Theorem \ref{thm:QMDT}, we first introduce an intermediate definition.
\begin{definition}
    Let $(X,d,\mu)$ be a metric measure space with $\mu$ Borel regular. Let $E\subset X$ be a nonempty set. Given a ball $B$ of radius $r$, we define
    \begin{equation}\label{d_E}
        d_E(B) = \frac{1}{\mu(B)}\int_{B}\frac{\dist(x,E)}{r}d\mu(x).
    \end{equation}
\end{definition}
This is simply the average distance of a point in $B$ to the set $E$, normalized by the radius of $B$. A simple calculation shows that if $B\cap E \neq \emptyset$, then $0\leq d_E(B)\leq 2.$

We are now ready to state and prove the following proposition that will help us to prove Theorem \ref{thm:QMDT}. As noted in the introduction, the main idea of this proof can already be found in \cite[Section IV.1.2]{davidsemmes} and \cite[Lemma 1.4]{schul}.

\begin{proposition}\label{prop 4.9}
    Let $(X,d,\mu)$ be a doubling metric measure space equipped with a multiresolution family $\mathcal{B}$ of constant $A>0$. Assume $\mu(X)=1$ and $\diam{X}=1$.
    
    Let $E\subset X$. Then
    \begin{equation}\label{prop 4.3}
        \sum\limits_{\substack{B\in \mathcal{B}\\B\cap E\neq \emptyset}}d_E(B)\cdot\mu(B)\leq C_X,
    \end{equation}
    where $C_X$ is a constant depending only on $A$ and the doubling constant of $X$ (and not on $E$).
\end{proposition}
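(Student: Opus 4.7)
The plan is to apply Tonelli's theorem to swap the sum and integral, and then control the resulting integrand pointwise using the multi-scale doubling estimate from Lemma \ref{doubling property}.

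First, I would expand $d_E(B)\,\mu(B) = \int_B \frac{\dist(x,E)}{r}\, d\mu(x)$ and rewrite the quantity on the left-hand side of \eqref{prop 4.3} as
$$\sum_{\substack{B \in \mathcal{B} \\ B \cap E \neq \emptyset}} \int_B \frac{\dist(x,E)}{r(B)} \, d\mu(x).$$
Since the integrand is non-negative and the collection $\mathcal{B}$ is countable (the nets $N_k$ are countable in the totally bounded, hence separable, space $X$), Tonelli lets me rewrite this as $\int_X \dist(x,E) \cdot S(x) \, d\mu(x)$, where
$$S(x) = \sum_{\substack{B \in \mathcal{B},\, x \in B \\ B \cap E \neq \emptyset}} \frac{1}{r(B)}.$$

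The crux is then a uniform pointwise estimate $\dist(x,E)\cdot S(x) \leq 4M$, where $M$ is the constant from Lemma \ref{doubling property}. For any $x$ with $\delta := \dist(x,E) > 0$, the key geometric observation is that any ball $B$ of radius $r$ with $x \in B$ and $B \cap E \neq \emptyset$ must satisfy $r \geq \delta/2$, since by the triangle inequality $x$ is then within distance $2r$ of some point of $E$. Applied to balls in $\mathcal{B}$, this restricts the summation to scales $k \leq k_0$, where $k_0$ is the largest integer with $A \cdot 2^{-k_0} \geq \delta/2$; in particular $2^{k_0+1} \leq 4A/\delta$. Lemma \ref{doubling property} bounds the number of balls in $\mathcal{B}$ of each fixed radius containing $x$ by a constant $M$ depending only on $A$ and the doubling constant, so a geometric series estimate gives
$$S(x) \leq \sum_{k=0}^{k_0} \frac{M}{A \cdot 2^{-k}} = \frac{M}{A}\bigl(2^{k_0+1}-1\bigr) \leq \frac{4M}{\delta},$$
and hence $\dist(x,E) \cdot S(x) \leq 4M$, independently of $x$ and $E$.

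Integrating this pointwise bound over $X$ and using $\mu(X)=1$ finishes the proof with $C_X = 4M$. I do not expect a serious obstacle here; the one delicate point is the scale cutoff $r \geq \delta/2$ coming from $B \cap E \neq \emptyset$, which is exactly what converts the otherwise divergent unweighted sum $\sum_{B \ni x} 1/r(B)$ into the convergent quantity $4M/\delta$. The hypothesis $\mu(X)=1$ enters only in the final integration, while $\diam X = 1$ (together with $A \geq 1$) plays no essential role beyond normalization and ensuring $k_0 \geq 0$.
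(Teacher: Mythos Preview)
Your proposal is correct and follows essentially the same route as the paper: expand $d_E(B)\mu(B)$ as an integral, interchange sum and integral by Tonelli, use the observation that any ball meeting both $x$ and $E$ has radius at least $\tfrac12\dist(x,E)$ to cut off small scales, invoke Lemma~\ref{doubling property} to bound the number of balls per scale, and sum the resulting geometric series to obtain the constant $C_X=4M$. The only cosmetic difference is that you factor out $\dist(x,E)$ and bound $S(x)=\sum 1/r(B)$, whereas the paper keeps the summand as $\dist(x,E)/r$ and bounds each term by $2^{1-j}$ at scale $N-j$; these are equivalent reorganizations of the same computation.
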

\begin{proof} Note that we have
\begin{equation*}
    \sum\limits_{\substack{B\in \mathcal{B}\\B\cap E\neq \emptyset}}d_E(B)\cdot\mu(B) = \sum\limits_{\substack{B\in \mathcal{B}\\B\cap E\neq \emptyset}} \int_{B}\frac{\dist(x,E)}{r}d\mu(x) = \int_{X}\left(\sum\limits_{\substack{B\in \mathcal{B}\\B\cap E\neq \emptyset\\x\in B}}\frac{\dist(x,E)}{r}\right)d\mu(x),
\end{equation*}
where the $r$ in the summand denotes the radius of the ball $B\in\mathcal{B}$ being summed over.

We will now bound the integrand. Note that if $x\in \bar{E}$, then $\dist(x,E)=0$. So fix $x\in X$ with $x\notin \bar{E}$, and consider
\begin{equation}\label{sum dist}
    \sum\limits_{\substack{B\in \mathcal{B}\\B\cap E\neq \emptyset\\x\in B}}\frac{\dist(x,E)}{r}.
\end{equation}
Let $N$ be the biggest integer such that $A\cdot 2^{-N}\geq \frac{1}{2}\cdot \dist(x,E)$. For balls of radius $r = A\cdot 2^{-N}$, we have 
\begin{equation*}
    \frac{\dist(x,E)}{r}\leq \frac{2\cdot A\cdot 2^{-N}}{r}= \frac{2\cdot A\cdot 2^{-N}}{A\cdot 2^{-N}}=2.
\end{equation*}
Note that there are no balls in the summation in \eqref{sum dist} with radius $A\cdot 2^{-(N+1)},A\cdot 2^{-(N+2)},...$ because such a ball would have radius $r\leq A\cdot 2^{-(N+1)}<\frac{1}{2}\dist(x,E)$, and such a ball could not touch both $E$ and $x$. 
Recall that, since $N$ is the biggest integer such that $A\cdot 2^{-N}\geq \frac{1}{2}\cdot \dist(x.E)$, we have
\begin{equation*}
    \dist(x,E)\leq A\cdot 2^{-(N-1)}.
\end{equation*}
If $j\in \{0,1,2,...,N\}$ and $B$ is a ball in the sum \eqref{sum dist} of radius $A\cdot 2^{-(N-j)}$, then the summand
\begin{equation*}
    \frac{\dist(x,E)}{r}\leq \frac{A\cdot 2^{-(N-1)}}{r}=\frac{A\cdot 2^{-(N-1)}}{A\cdot 2^{-(N-j)}}=\frac{1}{2^{j-1}}.
\end{equation*}
For each fixed radius $A\cdot 2^{-k}$  ($k\in \{0,1,,\dots,N\}$), Lemma \ref{doubling property} tells us that there are at most $M$ balls contaning $x$ with radius equal to $A\cdot 2^{-k}$, where $M$ depends on $N$ and $A$. Therefore we have,
\begin{equation*}
     \sum\limits_{\substack{B\in \mathcal{B}\\B\cap E\neq \emptyset\\x\in B}}\frac{\dist(x,E)}{r}\leq M\cdot(2+1+\frac{1}{2}+\frac{1}{4}+...)\leq M\cdot(\sum_{i=-1}^{\infty}\frac{1}{2^i}) = M\cdot(2+2)=4M.
\end{equation*}
Let $C_X = 4M$. We now have 
\begin{equation*}
    \int_{X}\left(\sum\limits_{\substack{B\in \mathcal{B}\\B\cap E\neq \emptyset\\x\in B}}\frac{\dist(x,E)}{r}\right)d\mu(x) \leq \int_{X}C_X d\mu(x) = C_X \cdot \mu(X) = C_X.
\end{equation*}
Therefore
\begin{equation*}
     \sum\limits_{\substack{B\in \mathcal{B}\\B\cap E\neq \emptyset}}d_E(B)\cdot\mu(B)\leq C_X .
\end{equation*}
\end{proof}
The following lemma connects $\sparse(E,B)$ to $d_E(B)$. There is a minor technical issue that we must overcome, which is that $\sparse(E,B)$ considers only points in $E\cap B$, while $d_E(B)$ considers points of $E$ that may lie outside of $B$. The geodesic assumption allows us to link the two quantities despite this.
\begin{lemma}\label{lemm 4.2}
    Let $(X,d,\mu)$ be a complete, geodesic metric measure space, and $\mu$ a C-doubling measure. Let $E\subset X$ and $B$ be a ball in $X$.
    
    Given $\epsilon >0$, there is a $\delta>0$, depending only on $\epsilon$ and $C$, such that if $d_E(B)<\delta$, then
    \begin{equation*}
        \sparse(E,B)<\epsilon.
    \end{equation*}
\end{lemma}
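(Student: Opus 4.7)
My strategy is to prove the lemma directly: fixing $x \in B = B(c, r)$, I will exhibit $y \in E \cap B$ with $d(x, y) < \epsilon r$, provided $d_E(B) < \delta$ for a suitable $\delta = \delta(\epsilon, C)$. The crux is that $d_E(B)$ only controls distance to $E$, whereas sparsity demands closeness to $E \cap B$; absent extra structure, a point $x$ near $\partial B$ could be close to points of $E$ lying just outside $B$ without any point of $E \cap B$ being nearby. The geodesic hypothesis will let me sidestep this boundary effect.

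First I perform a geodesic inward shift. For $x \in B$, let $x^{**}$ be the point at distance $\min(\epsilon r / 4, d(x, c))$ from $x$ along the geodesic from $x$ to $c$. Then $d(x, x^{**}) \leq \epsilon r / 4$ and $d(x^{**}, c) \leq \max(0, d(x, c) - \epsilon r / 4) \leq r - \epsilon r / 4$, so by the triangle inequality,
\begin{equation*}
    B(x^{**}, \epsilon r / 4) \subseteq B.
\end{equation*}
The point of this shift is that any element of $E$ caught inside $B(x^{**}, \epsilon r / 4)$ automatically lies in $E \cap B$.

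It then suffices to show $B(x^{**}, \epsilon r / 4) \cap E \neq \emptyset$, which I do by contradiction. If this intersection were empty, every $z \in B(x^{**}, \epsilon r / 8)$ would satisfy $\dist(z, E) \geq \epsilon r / 8$, and since $B(x^{**}, \epsilon r / 8) \subseteq B$ this yields
\begin{equation*}
    d_E(B) \geq \frac{\epsilon}{8} \cdot \frac{\mu(B(x^{**}, \epsilon r / 8))}{\mu(B)}.
\end{equation*}
Since $x^{**} \in B$ gives $B \subseteq B(x^{**}, 2r)$, iterating the $C$-doubling condition $k = \lceil \log_2(16/\epsilon) \rceil$ times lower-bounds the displayed ratio by $C^{-k}$. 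Choosing $\delta := \epsilon / (8 C^k)$ therefore contradicts $d_E(B) < \delta$, producing $y \in E \cap B$ with $d(x^{**}, y) \leq \epsilon r / 4$ and hence $d(x, y) \leq \epsilon r / 2 < \epsilon r$, so $\sparse(E, B) < \epsilon$. The main obstacle is really the inward shift: without the geodesic hypothesis there is no general way to move $x$ inward by a controlled amount while keeping a concentric ball of comparable radius inside $B$, and the remainder is a routine averaging-plus-doubling estimate.
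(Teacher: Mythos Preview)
Your proof is correct and follows essentially the same approach as the paper: a geodesic inward shift of the point $x$ toward the center to ensure a small ball around the shifted point lies inside $B$, followed by the doubling estimate to lower-bound $d_E(B)$ by $\epsilon/(8C^k)$. The only cosmetic difference is that the paper organizes the argument as a contrapositive (assume $\sparse(E,B)\geq\epsilon$, pick a single bad $x$, and derive $d_E(B)\geq\delta$) rather than your direct argument with an embedded contradiction, but the substance and constants are identical.
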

\begin{proof}
    We will prove this by contrapositive. Without loss of generality, let $0<\epsilon<1$ be given. Suppose we have a metric measure space $(X,d,\mu)$ satisfying the assumptions on the lemma, a subset $E\subset X$, and a ball $B=B(p,r)$ such that $\sparse(E,B)\geq \epsilon$. We will bound $d_E(B)$ from below by a constant depending only on $\epsilon$ and $C$.

    The assumption on sparsity translates to
    \begin{equation*}
        \sup\{\dist(x,E\cap B): x\in B\}\geq \epsilon r.
    \end{equation*}
    Thus, there is a point $x\in B$ such that $\dist(x,E\cap B)> \epsilon \frac{3r}{4}$. The ball $B\left(x,\frac{3\epsilon}{4}r\right)$ therefore contains no points of $E\cap B$.

\begin{claim}\label{claim:y}
There is a point $y\in B\left(x,\frac{\epsilon}{4}r\right)$ such that
$$ B\left(y,\frac{\epsilon}{4}r\right) \subseteq B \cap B\left(x,\frac{3\epsilon}{4}\right)$$
\end{claim}
\begin{proof}    
If $d(p,x)<\frac{\epsilon}{4}$, set $y=x$. The conclusion then follows immediately from the triangle inequality, recalling our assumption that $\epsilon<1$.

If $d(p,x)\geq \frac{\epsilon}{4}$, we do the following. Let $\gamma : [0,d(p,x)]\rightarrow X$ be a geodesic from $x$ to $p$ such that $\gamma(0)=x$, $\gamma(d(p,x))=p$. Set $y=\gamma(\frac{\epsilon}{4}r)$. Then 
$$ d(y,x) = \frac{\epsilon}{4}r$$
    and
$$ d(y,p)=d(p,x)-\frac{\epsilon}{4}r.$$
It then follows easily from the triangle inequality that
$$ B\left(y,\frac{\epsilon}{4}r\right) \subseteq B \cap B\left(x,\frac{3\epsilon}{4}\right)$$

\end{proof}
    
    \begin{claim}
        $\dist(z,E)\geq \frac{\epsilon}{8}r$ for every $z\in B(y,\frac{\epsilon}{8}r)$.
    \end{claim}
    \begin{proof}
    By the triangle inequality and the previous claim, if $z\in B\left(y,\frac{\epsilon}{8}r\right)$, then we have
    $$ B\left(z, \frac{\epsilon}{8}r\right) \subseteq B\left(y, \frac{\epsilon}{4}r\right) \subseteq B\cap B\left(x,\frac{3\epsilon}{4}r\right).$$
    The final set in this inclusion contains no points of $E$, so $B\left(z, \frac{\epsilon}{8}r\right)$ contains no points of $E$, and this proves the claim.
    \end{proof}

We now have   
    \begin{equation*}
        d_E(B)=\frac{1}{\mu(B)}\int_{B}\frac{\dist(z,E)}{r}d\mu(z)\geq \frac{1}{\mu(B)}\int_{B(y,\frac{\epsilon}{8}r)}\frac{\dist(z,E)}{r}d\mu(z)
    \end{equation*}
    \begin{equation*}
        \geq \frac{1}{\mu(B)}\int_{B(y,\frac{\epsilon}{8}r)}\frac{\epsilon r}{8r}d\mu(z)\geq \frac{\epsilon}{8}\cdot \frac{\mu(B(y,\frac{\epsilon}{8}r))}{\mu(B)}.
    \end{equation*}
    If $N$ is the smallest integer such that $2^{N}\cdot \frac{\epsilon}{8}\geq 2$, then $B\subset B(y,2^{N}\cdot \frac{\epsilon}{8}r)$. Because $\mu$ is $C$-doubling,
    \begin{equation*}
        C^{N}\cdot \mu(B(y,\frac{\epsilon}{8}r))\geq \mu(B(y,2^{N}\cdot \frac{\epsilon}{8}r))\geq \mu(B).
    \end{equation*}
    So we have
    \begin{equation*}
        \frac{\mu(B(y,\frac{\epsilon}{8}r))}{\mu(B)}\geq \frac{1}{C^{N}}
    \end{equation*}
    Let $\delta=\frac{\epsilon}{8}\cdot \frac{1}{C^{N}}$, which depends only on $\epsilon$ and $C$. Then 
    \begin{equation*}
        d_E(B)\geq \frac{\epsilon}{8}\cdot \frac{\mu(B(y,\frac{\epsilon}{8}r))}{\mu(B)}\geq \frac{\epsilon}{8}\cdot \frac{1}{C^{N}}=\delta.
    \end{equation*}
\end{proof}
We can now prove Theorem \ref{thm:QMDT}.
\begin{proof}[Proof of Theorem \ref{thm:QMDT}]
    Given $\epsilon>0$, choose $\delta>0$ as in Lemma \ref{lemm 4.2}. This yields that
    \begin{equation*}
    \sum_{B\in \mathcal{B}(E,\epsilon)}\mu(B) \leq \sum\limits_{\substack{B\in \mathcal{B}\\d_E(B)\geq \delta \\ B\cap E\neq \emptyset}}\mu(B)
\end{equation*}
By Proposition \ref{prop 4.9}, we have
\begin{equation*}
    \sum\limits_{\substack{B\in \mathcal{B}\\d_E(B)\geq \delta \\ B\cap E\neq \emptyset}}\mu(B)\leq \sum_{\substack{B\in \mathcal{B}\\ \\ B\cap E\neq \emptyset}}\frac{1}{\delta}d_E(B)\cdot\mu(B) \leq \frac{1}{\delta}\cdot C_X.
\end{equation*}
\end{proof}

\begin{remark}\label{rmk:diameter}
    Suppose that $(Y,d_Y,\mu)$ is a metric measure space with $\mu$ a $C$-doubling measure, $\mu(Y)=1$, but $\diam{Y}=D>1$. Let $\mathcal{B}$ be a multiresolution family in $Y$ consisting of balls of radii $D\cdot A\cdot 2^{-k}$ for $k\ge 0$, and let $E\subset Y$ be measurable. Consider the metric measure space $(X,d_X,\mu)=(Y,\frac{1}{D}d,\mu)$. In the space $X$, the balls of $\mathcal{B}$ have radii $A\cdot 2^{-k}$ for $k\ge 0$. Note in addition that the sparsity of a set in a ball is unaffected by this rescaling, and therefore the notation $\mathcal{B}(E,\epsilon)$ is unambiguous.
    
    Thus given $\epsilon>0$, there is a $K(\epsilon,C,A)>0$ such that if $E\subset Y$ then 
    \begin{equation*}
        \sum_{B\in \mathcal{B}(E,\epsilon)}\mu(B)\le K(\epsilon,C,A).
    \end{equation*}
Thus, a version of Theorem \ref{thm:QMDT} with $\tilde{A}=DA$ holds for metric measure spaces with diameter $D$ greater than 1.\end{remark}
We now prove Corollary \ref{cor:QMDT}.
\begin{proof}[Proof of Corollary \ref{cor:QMDT}:]
    Fix $A=1$ in the definition of our multiresolution family. By Theorem \ref{thm:QMDT}, if $E\subset X$ then
    \begin{equation*}
        \sum_{B \in B(E,\epsilon)} \mu(B)\leq K
    \end{equation*}
    where $K$ depends only on $\epsilon$ and on the doubling measure constant $C$.
    Let $N$ be the first integer such that some ball $B$ in $\mathcal{B}$ of radius $A\cdot 2^{-N}$ touches $E$ and is not in $\mathcal{B}(E,\epsilon)$, that is, $\sparse(E,B)<\epsilon$. This implies that all balls with radii $A\cdot 2^{0}$, $A\cdot 2^{-1}$,...,$A\cdot 2^{-(N-1)}$ that touch $E$ are contained in $\mathcal{B}(E,\epsilon)$. Therefore, the above sum is bounded below by the sum of all the measures of these balls. For each $n\in \{0,1,...,N-1\}$, the measure of the union of all such balls at a given scale is greater than or equal to the measure of $E$. Therefore
\begin{equation*}
    \mu(E)\cdot N \le \sum_{B\in \mathcal{B}(E,\epsilon)}\mu(B) \le K.
\end{equation*}
Note $\mu(E)\geq \alpha$, so we have
\begin{equation*}
    N\cdot \alpha \leq N\cdot \mu(E) \leq K
\end{equation*}
and so
\begin{equation*}
    N \le \frac{K}{\alpha}.
\end{equation*}
Thus, there is a ball $B$ with radius $A\cdot2^{-N}\geq A\cdot2^{-\frac{K}{\alpha}}$ that intersects $E$ and has $\sparse(E,B)<\epsilon$. So we can take $r=A\cdot 2^{-N}$ and $r_0=A\cdot 2^{-\frac{K}{\alpha}}$.
\end{proof}

At this point, it may be instructive to return to the example set(s) $E_k$ in Section \ref{sec:counterexample}, which showed that statement \eqref{eq:false} failed, i.e., that one cannot expect a large ball where a measurable set is measure-theoretically dense.

Corollary \ref{cor:QMDT}, however, applies perfectly well to these sets. For small values of $k$, $E_k$ contains large full dyadic intervals, and thus of course has large scales in which it is metrically dense (i.e., has small sparsity). For large values of $k$, $E_k$ is metrically very dense in the whole interval $[0,1]$, and so again has a large scale where it has small sparsity.

\section{Quantitative connectivity}\label{sec:connectivity}
In this section, we prove Theorem \ref{thm:connected}. This concerns subsets of $[0,1]^d$, rather than general metric spaces.

If $E\subseteq [0,1]^d$ and $Q$ is a dyadic cube such that $7Q\cap E\neq \emptyset$, we set 
\begin{equation*}
    \sparse(E,7Q)=\frac{1}{\side(7Q)}\sup\{\dist(x,7Q\cap E): x\in 7Q\}.
\end{equation*}

We first slightly adjust Theorem \ref{thm:QMDT} to apply to dyadic cubes.

\begin{corollary}\label{QMDT cube}
    Given $\epsilon>0$, there is a $\Tilde{K}>0$, depending only on $\epsilon$ and $d$, such that if $E\subset [0,1]^{d}$, then
    \begin{equation*}
    \sum\limits_{\substack{Q\in \Delta \\ \sparse(E,7Q)\geq \epsilon \\ 3Q\cap E\neq \emptyset}}\lambda(Q)\leq \Tilde{K} .
    \end{equation*}
\end{corollary}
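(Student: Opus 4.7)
The plan is to mimic the proof of Theorem \ref{thm:QMDT} directly for inflated dyadic cubes $7Q$, rather than try to deduce the corollary from Theorem \ref{thm:QMDT} as a black box. A black-box reduction is awkward because sparsity is not monotone under inclusion: $\sparse(E,7Q) \geq \epsilon$ need not imply $\sparse(E,B) \gtrsim \epsilon$ for a comparable ball $B \supseteq 7Q$, since extra points of $E \cap (B\setminus 7Q)$ can shrink the supremum defining sparsity in $B$. The direct adaptation, however, is straightforward because the cubes $7Q$ enjoy two convenient properties: at each scale $k$ every point of $[0,1]^d$ lies in at most $7^d$ cubes of the form $7Q$ with $Q \in \Delta_k$, and $7Q$ is a convex set comparable to a Euclidean ball of radius $\frac12 \side(7Q)$ centered at the center $c_Q$ of $Q$.

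First I would introduce the cube analog of \eqref{d_E},
\[
d_E(7Q) \;=\; \frac{1}{\lambda(7Q)} \int_{7Q} \frac{\dist(x,E)}{\side(7Q)} \, d\lambda(x),
\]
and establish the cube analog of Proposition \ref{prop 4.9}: $\sum_{Q:\,7Q \cap E \neq \emptyset} d_E(7Q)\,\lambda(7Q) \leq C_d$. The proof is essentially identical. By Fubini the left side equals
\[
\int_{[0,1]^d} \sum_{\substack{Q \in \Delta \\ x \in 7Q,\; 7Q \cap E \neq \emptyset}} \frac{\dist(x,E)}{\side(7Q)} \, d\lambda(x),
\]
and for each fixed $x \notin \overline{E}$ the inner sum is controlled as in Proposition \ref{prop 4.9}: there are at most $7^d$ cubes $Q \in \Delta_k$ with $x \in 7Q$ at each scale, and writing $k = N - j$ (where $N$ is the largest scale at which $\diam(7Q) \geq \dist(x,E)$), the summand is bounded by $\sqrt{d}\cdot 2^{-j}$, yielding a geometrically convergent sum independent of $x$.

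Next I would prove the cube analog of Lemma \ref{lemm 4.2}: if $\sparse(E, 7Q) \geq \epsilon$, then $d_E(7Q) \geq \delta$ for some $\delta = \delta(\epsilon, d) > 0$. The hypothesis yields $x_0 \in 7Q$ with $\dist(x_0, E \cap 7Q) > \frac{3}{4}\cdot 7\epsilon \cdot 2^{-k}$. Using the convexity of $7Q$, I translate each coordinate of $x_0$ toward the corresponding coordinate of $c_Q$ by at most $\rho$, obtaining $y \in 7Q$ with $|y - x_0|_\infty \leq \rho$ (hence $|y - x_0|_2 \leq \rho\sqrt{d}$), where $\rho := \frac{21\epsilon \cdot 2^{-k}}{4(\sqrt{d}+1)}$. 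A direct triangle inequality check shows $B(y,\rho) \subseteq 7Q \cap B(x_0, \frac{3}{4}\cdot 7\epsilon \cdot 2^{-k})$; the latter set is disjoint from $E \cap 7Q$, and because $B(y,\rho) \subseteq 7Q$, it is disjoint from $E$ entirely. Integrating $\dist(\cdot, E) \geq \rho/2$ over $B(y,\rho/2)$ then gives $d_E(7Q) \geq \delta(\epsilon, d) > 0$.

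Combining these two steps by Chebyshev,
\[
\sum_{\substack{Q \in \Delta \\ \sparse(E, 7Q) \geq \epsilon \\ 7Q \cap E \neq \emptyset}} \lambda(Q) \;=\; 7^{-d} \!\!\sum_{\substack{Q \\ \sparse(E,7Q)\geq \epsilon \\ 7Q \cap E \neq \emptyset}}\!\! \lambda(7Q) \;\leq\; \frac{1}{7^d\,\delta(\epsilon,d)} \sum_{Q:\, 7Q \cap E \neq \emptyset} d_E(7Q)\, \lambda(7Q) \;\leq\; \frac{C_d}{7^d\,\delta(\epsilon, d)} \;=:\; \tilde{K},
\]
and since $3Q \cap E \neq \emptyset$ implies $7Q \cap E \neq \emptyset$, the corollary's sum is bounded by the same $\tilde{K}$. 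The only step requiring any care is the $\ell^\infty$/convexity bookkeeping to fit $B(y,\rho)$ inside $7Q$ when $x_0$ lies near the boundary, but this is elementary since $7Q$ is a cube and one can slide each coordinate of $x_0$ inward independently.
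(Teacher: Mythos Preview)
Your approach is correct, but the paper actually carries out the black-box reduction you deemed awkward. It fixes a multiresolution family $\mathcal{B}$ with $A=7\sqrt{d}$, assigns to each $Q\in\Delta_k$ a ball $B_Q\in\mathcal{B}$ of radius $7\sqrt{d}\,2^{-k}$ containing $7Q$, and proves that $\sparse(E,7Q)\leq \frac{A(d+1)}{7}\sparse(E,B_Q)$ whenever $E\cap 7Q\neq\emptyset$. The non-monotonicity you worried about is handled by exactly the coordinate-sliding trick you use in your cube analog of Lemma~\ref{lemm 4.2}: if the nearest point of $E\cap B_Q$ to some $x\in 7Q$ lies outside $7Q$, then $x$ is close to $\partial(7Q)$, so one first perturbs $x$ inward along each axis before invoking the sparsity bound in $B_Q$. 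After this comparison, bounded overlap of the assignment $Q\mapsto B_Q$ together with Theorem~\ref{thm:QMDT} finishes the proof. Thus both arguments hinge on the same geometric move but deploy it at different stages: the paper uses it once to compare sparsities and then quotes Theorem~\ref{thm:QMDT} verbatim, whereas you embed it in a cube version of Lemma~\ref{lemm 4.2} and rerun the entire Proposition~\ref{prop 4.9}/Lemma~\ref{lemm 4.2} machinery for cubes. Your route is more self-contained and avoids introducing an auxiliary ball family; the paper's route is shorter given that Theorem~\ref{thm:QMDT} is already in hand. One small correction to your write-up: in the Fubini step the outer integral should range over a region such as $[-3,4]^d$ rather than $[0,1]^d$, since the inflated cubes $7Q$ spill outside the unit cube; this affects only the final constant.
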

\begin{proof}
    Fix a multi-resolution family of balls $\mathcal{B}$ in $[0,1]^d$ with scaling factor $A=7\sqrt{d}$.
    
    For each $Q\in \Delta_k$, we may choose a ball $B_Q\in \mathcal{B}$ of radius $A 2^{-k} = 7\sqrt{d} 2^{-k}= \diam(7Q)$ such that $7Q\subseteq B_Q$.  

    Now fix a set $E\subset [0,1]^{d}$.
    \begin{claim}
        If $E\cap 7Q\neq\emptyset$, then $\sparse(E,7Q)\leq \frac{A(d+1)}{7}\sparse(E,B_Q)$.
    \end{claim}
    \begin{proof}
Let $x\in 7Q$ and $s=\sparse(E,B_Q)$. Let $k$ be such that $Q\in\Delta_k$, which forces the radius $r$ of $B_Q$ to be $A2^{-k}$.

To prove the claim, it will suffice to find a point $y\in E\cap 7Q$ with $|x-y|\leq (d+1) sr$, which we now do.

If $\dist(x,\partial(7Q))>sr$, then this is easy: there is a $y\in E\cap B$ such that $d(x,y)\leq sr$, and the assumption on the distance to the boundary forces $y\in E\cap7Q$.

Otherwise, $\dist(x,\partial(7Q))\leq sr$. In that case, we first perturb $x$ along $d$ separate line segments parallel to the coordinate axes, each of length at most $sr$, until we reach a point $x'\in 7Q$ with $\dist(x',7Q)>sr$. By the previous case, there is a $y\in 7Q\cap E$ such that $|x'-y|\leq sr$. It follows that
        \begin{equation*}
            |x-y|\leq sr + dsr = (d+1)sr.
        \end{equation*}

Thus,
        \begin{equation*}
            \sparse(E,7Q)\leq \frac{(d+1)sr}{\side(7Q)} = \frac{(d+1)r\sparse(E,B)}{7\cdot 2^{-k}}=\frac{(d+1)A\cdot 2^{-k}\sparse(E,B)}{7\cdot 2^{-k}}
        \end{equation*}
        \begin{equation*}
            =\frac{A(d+1)}{7}\sparse(E,B).
        \end{equation*}
    \end{proof}

Now, each $B_Q$ contains the corresponding cube $7Q$. In addition, each ball $B\in\mathcal{B}$ can act as $B_Q$ for at most $D$ different cubes $Q$, for some $D$ depending on $d$, by a simple volume argument.

Therefore, using the previous claim,
\begin{align*}
 \sum\limits_{\substack{Q\in \Delta \\ \sparse(E,7Q)\geq \epsilon \\ 3Q\cap E\neq \emptyset}}\lambda(Q) &\leq \sum\limits_{\substack{Q\in \Delta \\ \sparse(E,7Q)\geq \epsilon \\ 3Q\cap E\neq \emptyset}}\lambda(B_{Q})\\
 &\leq D \left(\sum\limits_{\substack{B\in \mathcal{B}\\ \sparse(E,B)\geq \frac{7\epsilon}{A(d+1)} \\ B\cap E\neq \emptyset}}\lambda(B)\right) 
\end{align*}
By Theorem \ref{thm:QMDT} (and Remark \ref{rmk:diameter}), this sum is controlled by a constant depending only on $\epsilon$ and $d$, which completes the proof.
\end{proof}
The next result uses Corollary \ref{QMDT cube} to show, roughly speaking, that most points of $E$ are not contained in too many cubes of large sparsity.

\begin{proposition}\label{prop 6.4}
    Let $\epsilon>0$ be given, and suppose $E\subset[0,1]^{d}$. 
    
    For $N\in \mathbb{N}$, let $Z_{N}$ be the collection of points that are contained in at least $N$ distinct cubes $7Q$ such that $Q\in\Delta$, $3Q\cap E\neq \emptyset$, and $\sparse(E,7Q)\ge \epsilon$.
    
    Then $\lambda(Z_{N})\leq 7^d \frac{\Tilde{K}}{N}$, where $\Tilde{K}$ is the constant from Corollary \ref{QMDT cube}.
\end{proposition}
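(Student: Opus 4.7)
The plan is to recognize this as a standard Markov/Chebyshev-style deduction from a Carleson packing condition. The packing bound sits in Corollary \ref{QMDT cube}, which already tells us that the total $\lambda$-mass of the ``bad'' cubes $Q$ (those with $3Q \cap E \neq \emptyset$ and $\sparse(E,7Q)\geq \epsilon$) is at most $\tilde{K}$. We just need to convert this packing bound on the $Q$'s into a bound on how many of the expanded cubes $7Q$ can cover any fixed point, and that is exactly what a layer-cake/Markov argument gives.

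Concretely, let $\mathcal{Q} = \{Q \in \Delta : 3Q \cap E \neq \emptyset,\ \sparse(E,7Q) \geq \epsilon\}$ and define the counting function
\begin{equation*}
    f(x) = \sum_{Q \in \mathcal{Q}} \mathbf{1}_{7Q}(x),
\end{equation*}
so that $Z_N = \{x \in [0,1]^d : f(x) \geq N\}$ by definition. Integrating and swapping sum and integral (both sides nonnegative),
\begin{equation*}
    \int_{\mathbb{R}^d} f(x)\, d\lambda(x) = \sum_{Q \in \mathcal{Q}} \lambda(7Q) = 7^d \sum_{Q \in \mathcal{Q}} \lambda(Q),
\end{equation*}
since $7Q$ has side length $7\cdot \side(Q)$. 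By Corollary \ref{QMDT cube}, the latter sum is at most $\tilde{K}$, so $\int f \, d\lambda \leq 7^d \tilde{K}$.

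Markov's inequality then yields
\begin{equation*}
    N \cdot \lambda(Z_N) \leq \int_{Z_N} f(x)\, d\lambda(x) \leq \int f(x)\, d\lambda(x) \leq 7^d \tilde{K},
\end{equation*}
which is the claimed bound after dividing by $N$. There is no real obstacle in this proof: Corollary \ref{QMDT cube} already does all the analytic work, and the only bookkeeping is the factor $7^d$ coming from $\lambda(7Q) = 7^d\lambda(Q)$, together with the trivial observation that on $Z_N$ the integrand $f$ is at least $N$.
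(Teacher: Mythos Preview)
Your proof is correct and is essentially the same as the paper's: both define the counting function $\sum_{Q\in\mathcal{Q}}\mathbf{1}_{7Q}$, integrate it using $\lambda(7Q)=7^d\lambda(Q)$ together with Corollary~\ref{QMDT cube}, and then apply the Chebyshev/Markov bound on $Z_N$. The only cosmetic difference is that you integrate over $\mathbb{R}^d$ while the paper integrates over $[0,1]^d$, which is harmless for the inequality in question.
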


\begin{proof}
 For $N\in \mathbb{N}$ we have
    \begin{equation*}
        Z_{N}=\{x\in [0,1]^{d}: x\textit{ is in at least } N \textit{ distinct dyadic cubes } 7Q \textit{ such that } 3Q\cap E\neq \emptyset \textit{ and } \sparse(E,7Q)\geq \epsilon\}.
    \end{equation*}
The set $Z_N$ is a union of countably many cubes, and therefore Lebesgue measurable. By Corollary \ref{QMDT cube},
\begin{align*}
\tilde{K} &\geq  \sum\limits_{\substack{Q\in \Delta\\ 3Q\cap E\neq \emptyset\\ \sparse(E,7Q)\geq \epsilon}}\lambda(Q)\\
&= 7^{-d}\sum\limits_{\substack{Q\in \Delta\\ 3Q\cap E\neq \emptyset\\ \sparse(E,7Q)\geq \epsilon}}\lambda(7Q)\\
&= 7^{-d}\int_{[0,1]^{d}}\left(\sum\limits_{\substack{Q\in \Delta \\ x\in 7Q \\ 3Q\cap E \neq \emptyset\\ \sparse(E,7Q)\ge \epsilon}}1 \right)dx \\
&\geq  7^{-d}\int_{Z_N}\left(\sum\limits_{\substack{Q\in \Delta \\ x\in 7Q \\ 3Q\cap E \neq \emptyset\\ \sparse(E,7Q)\ge \epsilon}}1 \right)dx \\
&\geq 7^{-d}\int_{Z_N} N d\lambda \\
&= 7^{-d} N\lambda(Z_N).
\end{align*}
The conclusion then follows.
\end{proof}

As preparation for the ``coding'' argument below, we need the following simple fact about dyadic cubes.
\begin{lemma}\label{lem:7Q}
    Let $Q,R\in \Delta$ such that $\side(R)\geq \side(Q)$ and $3Q\cap 3R\neq \emptyset$. Then $3Q\subset 7R$.
\end{lemma}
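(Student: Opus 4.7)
The plan is to reduce to a one-dimensional, coordinate-by-coordinate argument. Since all three sets $3Q$, $3R$, and $7R$ are products of intervals along the $d$ coordinate axes, the containment $3Q \subset 7R$ holds if and only if it holds after projecting onto each coordinate. Likewise, $3Q \cap 3R \neq \emptyset$ forces (and is in fact equivalent to) the corresponding intersection being nonempty in every coordinate. So it suffices to prove the following one-dimensional statement: if $I, J$ are dyadic intervals in $\mathbb{R}$ with $\side(J) \geq \side(I)$ and $3I \cap 3J \neq \emptyset$, then $3I \subseteq 7J$.

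For the one-dimensional claim, I would write $\side(J) = 2^{-j}$ and $\side(I) = 2^{-k}$ with $k \geq j$, and parameterize $J = [b\cdot 2^{-j}, (b+1)2^{-j})$ and $I = [m\cdot 2^{-k}, (m+1)2^{-k})$ for integers $b, m$. To keep bookkeeping clean, rescale by $2^{k}$ so that everything sits on the integer lattice; setting $p = 2^{k-j} \geq 1$, the relevant triples become
\begin{equation*}
3I = [m-1,\, m+2), \quad 3J = [bp - p,\, bp + 2p), \quad 7J = [bp - 3p,\, bp + 4p).
\end{equation*}
The hypothesis $3I \cap 3J \neq \emptyset$, together with integrality of $m$, then forces $bp - p - 1 \leq m \leq bp + 2p$. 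To obtain $3I \subseteq 7J$, I need $m - 1 \geq bp - 3p$ and $m + 2 \leq bp + 4p$; both of these reduce to the single inequality $p \geq 1$, which is exactly the assumption $\side(J) \geq \side(I)$.

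There is no real obstacle here; the lemma is just a careful check with the dyadic grid. The conceptual point is that the expansion to $7R$ buys $3 \cdot \side(R)$ of slack on each side of $R$, while the intersection hypothesis combined with expanding $Q$ to $3Q$ only costs at most $\side(R) + \side(Q) \leq 2\cdot\side(R)$ of slack on each side, leaving room to spare. Applying the one-dimensional statement coordinate-wise then gives $3Q \subseteq 7R$.
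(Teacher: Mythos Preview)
Your argument is correct. Reducing to one dimension via coordinate projections is legitimate here because $3Q$, $3R$, and $7R$ are all axis-parallel boxes, and your rescaled integer computation goes through cleanly: the key point that $p=2^{k-j}$ is a positive integer is exactly what lets you pass from the strict inequalities to $bp-p-1\le m\le bp+2p$, and then the two containment inequalities each reduce to $p\ge 1$.

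The paper takes a different, more geometric route: it picks a point $x\in 3Q\cap 3R$, lets $R'$ be the cell of $3R$ containing $x$, and asserts $3Q\subset 5R'\subset 7R$. The second inclusion is immediate; the first uses dyadic nesting (the cell of $3Q$ containing $x$ must sit inside $R'$, so $3Q$ sits inside $5R'$). Your approach has the advantage of being fully explicit with no hidden verification left to the reader, while the paper's version is shorter and more conceptual, and makes visible the intermediate object $5R'$. Both are elementary; neither generalizes more easily than the other, since both rely on the product structure and dyadic alignment of the cubes.
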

\begin{proof}
    Let $x\in 3Q\cap 3R\neq \emptyset$. There is a cell $R'\subset 3R$ containing $x$. Then $3Q\subset 5R'\subset 7R$.
\end{proof}

The following lemma provides useful decompositions of sets for which each point is contained in only a controlled number of cubes from a given family. To our knowledge, the idea appears first (in a different context) in \cite[Lemma 2.2]{jones_1} and is also explained in \cite[Lemma 8.4]{david}.

We take the opportunity here to present the lemma in a fairly general form and also give a proof that is somewhat different than those cited above, and which we find a bit more transparent.
    
\begin{lemma}\label{lemma 6.6}
Let $G\subset [0,1]^{d}$, and $B\subset \Delta$. Assume that $3Q\cap G\neq\emptyset$ for every $Q\in B$, and that each $x\in G$ is contained in at most $N$ different cubes $7Q$ for $Q\in B$.

There is a constant $M>0$, depending only on $d$ and $N$, and sets $F_{1},F_{2},...,F_{M}$ with the following properties:
    \begin{enumerate}[(i)]
        \item $G=F_{1}\cup F_{2}\cup...\cup F_{M}$
        \item If $x,y\in F_{i}$ are distinct and $Q$ is a dyadic cube of minimal side length such that $x,y\in 3Q$, then $Q\notin B$. 
    \end{enumerate}
\end{lemma}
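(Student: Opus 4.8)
The plan is a coloring (or ``coding'') argument: to each $x\in G$ I attach a finite ``combinatorial type'' $\theta(x)$, drawn from a set of size $M=M(d,N)$, and set $F_i=\{x\in G:\theta(x)=i\}$. Property (i) is then automatic, and the whole problem is to choose $\theta$ so that (ii) holds. The data $\theta(x)$ should remember is the family $\mathcal{Q}(x)=\{Q\in B:x\in 3Q\}$ --- which has at most $N$ elements, since $3Q\subseteq 7Q$ --- together with the position of $x$ inside each dilate $3Q$. The key position datum is, for $Q\in\mathcal{Q}(x)$, the index $\tau_Q(x)\in\{1,\dots,6^d\}$ of the dyadic cube of side $\tfrac12\side(Q)$ inside $3Q$ that contains $x$.

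Two geometric observations drive everything. First (dilate-monotonicity), if $R\subseteq Q$ are dyadic cubes then $3R\subseteq 3Q$; this is immediate from the coordinatewise definition of $3(\cdot)$. Consequently, if $x\in 3Q$ then $x\in 3Q'$ for every $B$-ancestor $Q'$ of $Q$, so the ``depth'' $a(Q)=\#\{Q'\in B:Q\subseteq Q'\}$ is at most $N$ for $Q\in\mathcal{Q}(x)$, and --- crucially --- $a(Q)$ does not depend on which point ``sees'' $Q$. Second (minimality forces separation), if $x,y\in 3Q$ and $Q$ has minimal side length among all dyadic cubes whose triple contains both, then $x$ and $y$ lie in \emph{distinct} dyadic cubes of side $\tfrac12\side(Q)$: otherwise that common smaller cube $s$ would satisfy $x,y\in s\subseteq 3s$ with $\side(s)<\side(Q)$, contradicting minimality. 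In particular $\tau_Q(x)\ne\tau_Q(y)$.

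Given these, the argument for (ii) runs as follows. Suppose $x,y\in F_i$, let $Q$ be a dyadic cube of minimal side length with $x,y\in 3Q$, and suppose toward a contradiction that $Q\in B$. Then $Q\in\mathcal{Q}(x)\cap\mathcal{Q}(y)$; $a(Q)$ is a well-defined number $\le N$ computed identically from either side; and $\tau_Q(x)\ne\tau_Q(y)$. So if $\theta$ records, in a frame-independent way, ``for each depth value $m$, the positions $\tau_Q$ of the cubes $Q\in\mathcal{Q}(\cdot)$ with $a(Q)=m$,'' then $\theta(x)=\theta(y)$ will be contradicted --- and, since the depth ranges over $\{1,\dots,N\}$ and each $\tau$ over $\{1,\dots,6^d\}$, only finitely many types $\theta$ occur, the number bounded in terms of $d$ and $N$.

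The subtle point --- and where genuine care is needed, since a naive ``multiset of pairs $(a(Q),\tau_Q(x))$'' fails --- is that a single cube $Q\in\mathcal{Q}(x)\cap\mathcal{Q}(y)$ may sit at different offsets relative to the dyadic cells of $x$ and of $y$, so a crude type can inadvertently match $x$'s record of $Q$ against $y$'s record of a \emph{different} cube of the same depth. I expect to handle this in one of two ways. One option is to induct on $N$: the $\subseteq$-maximal cubes of $B$ form a pairwise \emph{disjoint} family (an antichain of dyadic cubes), every point that meets any dilate meets a maximal one (by dilate-monotonicity), so after treating the maximal cubes by a separate, more hands-on argument the remaining cubes of $B$ meet each point in at most $N-1$ dilates and the inductive hypothesis applies. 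The other option is to refine $\theta$ so that it is genuinely canonical --- for instance, within a fixed depth $m$, recording not just the unordered $\tau$-values but also the residues modulo a fixed power of $2$ of the relevant dyadic cells, which is enough to distinguish ``the cube $Q$ as seen from $x$'' from ``a different cube as seen from $y$.'' Making this bookkeeping close cleanly, with the final count depending only on $d$ and $N$, is the main obstacle.
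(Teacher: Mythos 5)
You have correctly identified both the overall strategy (assign each $x\in G$ a finite ``type'' built from the cubes $Q\in B$ with $x\in 3Q$ and the position of $x$ inside $3Q$, then take fibers) and the genuine obstruction: a naive multiset of pairs $\bigl(a(Q),\tau_Q(x)\bigr)$ does not force $\tau_Q(x)=\tau_Q(y)$ for the \emph{specific} minimal cube $Q$, because two different cubes of the same depth can have overlapping triples and the records of $x$ and $y$ can match up against the wrong cubes. Neither of your two proposed patches closes this gap. The induction-on-$N$ sketch rests on the maximal cubes forming a disjoint family, but disjoint dyadic cubes can have overlapping triples, so the ``separate, more hands-on argument'' for the top layer faces exactly the same ambiguity you were trying to avoid; and the residues-mod-$2^k$ refinement is not canonical, since the cubes of $B$ can have arbitrarily small side lengths and a fixed modulus cannot distinguish the relevant offsets with a count bounded only by $d$ and $N$.

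The missing idea is to replace ``depth'' with a partition of $B$ itself into $N$ subfamilies $\mathcal{A}_1,\dots,\mathcal{A}_N$ such that within each $\mathcal{A}_i$ the \emph{tripled} cubes are pairwise disjoint. This is what the paper does, by a greedy coloring of $B$ in decreasing order of side length: if a new cube $Q$ cannot be placed in any of the $N$ bins, then $3Q$ meets $3R_1,\dots,3R_N$ with each $R_i$ at least as large; Lemma~\ref{lem:7Q} then gives $3Q\subset 7R_i$ for all $i$, and since $3Q\cap G\neq\emptyset$ some point of $G$ lies in $7Q,7R_1,\dots,7R_N$, contradicting the hypothesis. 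Once the triples within each $\mathcal{A}_i$ are disjoint, each $x$ sees at most one cube per layer, so there is a canonical ``slot'' in the codeword for each $i$, and equality of codewords really does force $x$ and $y$ to occupy the same cell of the same cube $3Q$; the minimality contradiction then follows exactly as you outlined. So your plan is on the right track, but without this disjointness-within-layers decomposition (which also supplies the bound $M=(3^d+1)^N$), the coding does not close.
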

\begin{proof}
The proof is split into multiple claims for reading purposes.
    \begin{claim} 
    There are disjoint collections $\mathcal{A}_{1},...,\mathcal{A}_{N}\subset B$ such that the following are true:
    \begin{enumerate}[(i)]
        \item $B=\mathcal{A}_{1}\cup\mathcal{A}_{2}\cup...\cup\mathcal{A}_{N}$.
        \item If $Q,R\in \mathcal{A}_{i}$ are distinct for some $i\in\{1,2,...,N\}$, then $3Q\cap 3R=\emptyset$.
    \end{enumerate}
    \end{claim}
    \begin{proof}
        Order $B=\{Q_{1},Q_{2},...,\}$ by decreasing size, i.e., so that $\side(Q_i)\geq \side(Q_j)$ if $i\leq j$. We will construct the collections $\mathcal{A}_{1},...,\mathcal{A}_{N}$ inductively. Put $Q_{1}\in \mathcal{A}_{1}$.
        
        Assume now that $Q_{1},...,Q_{k}$ have been put in various collections among the $\mathcal{A}_{1},...,\mathcal{A}_{N}$, that the collections are (so far) disjoint from each other, and that if $Q,R\in \mathcal{A}_i$ then $3Q\cap 3R=\emptyset$. We wish to place $Q_{k+1}$ in some $\mathcal{A}_i$  with the property that $3Q_{k+1}\cap 3R=\emptyset$ for all $R$ currently in $\mathcal{A}_i$.
        
        Suppose there is no $i\in \{1,2,...,N\}$ such that $3Q_{k+1}\cap 3R=\emptyset$ for every $R\in \mathcal{A}_{i} \cap \{Q_1, \dots, Q_k\}$. Then $3Q_{k+1}$ intersects distinct cubes $3R_{1},3R_{2},...,3R_{N}$, where $R_{i}\in \mathcal{A}_{i}$ and $\side(R_{i})\geq \side(Q_{k+1})$. By  Lemma \ref{lem:7Q}, $3Q_{k+1}\subset 7R_{i}$ for every $i\in \{1,2,...,N\}$. Then there is an $x\in 3Q_{k+1}\cap G$ such that $x\in 7Q_{k+1},7R_{1},...,7R_{N}$. This contradicts the assumption that $x$ can be contained in at most $N$ different cubes $7Q$ for $Q\in B$.

        Therefore, we may place cube $Q_{k+1}$ in one of the collections $\mathcal{A}_i$ and maintain the desired properties. It follows by induction that the sets may be constructed as desired.
    \end{proof}
    Now let $\alpha = \{0,1,...,3^{d}\}$. We introduce a correspondence between the elements of $\alpha\setminus\{0\}$ and the $3^d$ different cells of a (hence any) tripled dyadic cube $3Q$. The precise choice of correspondence does not matter at all; to be concrete, we can order the cells by the ``dictionary'' order of their centers, so that cell $1$ is the cell closest to the origin, cell $2$ is its neighbor in the $x_1$ direction, etc.
    
    We will consider ``words $w$ of length $N$ from the alphabet $\alpha$,'' i.e., $w\in \alpha^N$. For such a $w$, $w_i\in \alpha$ denotes its $i$th letter.
    
    For each $x\in G$ we define $\omega(x)\in \alpha^{N}$ in the following way. Consider $i\in \{0,\dots, 3^d\}$. The point $x$ can be in at most one cube $3Q$ for $Q\in\mathcal{A}_i$. Let us call this cube $Q_{x,i}$, if it exists. We set the $i$th letter of $w(x)$ to be

    \[ \omega(x)_{i}= \begin{cases}
        0 & \textit{ if } x\notin \bigcup_{Q\in \mathcal{A}_{i}}3Q \\
        k\in \alpha\setminus\{0\} & \textit{ if } x \textit{ is in the kth cell of } 3Q_{x,i}.
    \end{cases}
    \]
Thus, for each $x$, there are at most $M=(3^{d}+1)^{N}$ possibilities for the word $\omega(x)$. Enumerate these possibilities in any order by $\alpha^{N}=\{\omega_{1},...,\omega_{M}\}$. Let $F_{j}=\{x\in G: \omega(x)=\omega_{j}\}$ for $j\in \{1,2,...,M\}$. It is immediate that $G = \cup_{j=1}^M F_j$, so it remains to verify the second conclusion of the lemma.

    \begin{claim}
        Suppose $x,y\in F_{j}$ and $x\neq y$. Let $Q$ be a dyadic cube of minimal side length such that $x,y\in 3Q$. Then $Q\notin B$.
    \end{claim}
    \begin{proof}
        Suppose $Q\in B$. Then $Q\in \mathcal{A}_{i}$ for some $i\in \{1,2,...,N\}$. Since $x,y\in F_{j}$, $\omega(x)=\omega(y)=\omega_{j}$. So $\omega(x)_{i}=\omega(y)_{i}$. It follows from the definition of $\omega(x)_i$ and $\omega(y)_i$ that $x$ and $y$ both lie in the same cell $Q'$ of $3Q$.
        
        Let $R$ be a child of $Q'$ that contains $x$. Then $x,y\in Q'\subset 3R$. Since $R$ is half the size of $Q$, this contradicts the fact that $Q$ is a minimal dyadic cube such that $x,y\in 3Q$. Therefore $Q\notin B$. 
    \end{proof}
Both conclusions of the lemma have now been verified.
\end{proof}
We will also need the following basic fact.
\begin{lemma}
    Let $Q\in \Delta$ be a minimal dyadic cube such that $x,y\in 3Q$. Then $\side(Q)\leq 2|x-y|$.
\end{lemma}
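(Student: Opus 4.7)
The plan is to argue by contradiction: I will assume $\side(Q) > 2|x-y|$ and then exhibit a strictly smaller dyadic cube $R$ with $x, y \in 3R$, contradicting the minimality of $Q$.

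The first step is to locate $x$ inside a single cell of $3Q$. Since $x \in 3Q$ and $3Q$ is the disjoint union of its $3^d$ cells, each of which is itself a dyadic cube of the same side length as $Q$, some such cell $Q'$ contains $x$. I then let $R$ be the (unique) child of $Q'$ containing $x$, so $R \in \Delta$, $x \in R$, and $\side(R) = \side(Q)/2$. The standing assumption translates into $|x-y| < \side(R)$.

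The main step is to verify that $y \in 3R$. Writing $R = \prod_{i=1}^{d} [b_i, b_i + s)$ with $s = \side(R)$, we have $x_i \in [b_i, b_i + s)$ and $|x_i - y_i| \le |x-y| < s$ for each coordinate $i$, so
\begin{equation*}
y_i \in (x_i - s,\, x_i + s) \subseteq [b_i - s,\, b_i + 2s).
\end{equation*}
Thus $y$ lies in $\prod_i [b_i - s, b_i + 2s) = 3R$. Combined with $x \in R \subseteq 3R$ and $\side(R) = \side(Q)/2 < \side(Q)$, this contradicts the minimality of $Q$ and completes the argument.

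I do not anticipate any real obstacle; the proof is essentially a coordinatewise comparison using that $3R$ is the $\side(R)$-neighborhood of $R$ in each axis direction, and it is precisely this slack that produces the factor of $2$ in the conclusion.
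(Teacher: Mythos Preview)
Your proof is correct and follows essentially the same approach as the paper's: both take a cell $Q'$ of $3Q$ containing $x$, pass to a child $R$ of $Q'$ containing $x$, and use that $y\in 3R$ would contradict minimality. The paper phrases this directly (minimality forces $y\notin 3R$, hence $\side(R)\le |x-y|$), while you phrase it as a contradiction and spell out the coordinatewise inclusion $y\in 3R$; these are logically equivalent.
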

\begin{proof}
    Let $Q'$ be a cell of $3Q$ such that $x\in Q'$. Let $R\subset Q'$ be a child of $Q'$ such that $x\in R$. Because $Q$ is a minimal cube such that $x,y\in 3Q$, it must be that $y\notin 3R$. So 
    \begin{equation*}
        \frac{1}{2}\side(Q)=\side(R)\leq |x-y|.
    \end{equation*}
    This implies that $\side(Q)\leq 2|x-y|$.
\end{proof}
We are now ready to prove Theorem \ref{thm:connected}. 
\begin{proof}[Proof of Theorem \ref{thm:connected}]
    Let $\alpha,\delta>0$. Without loss of generality we may assume $\delta<\frac{1}{2}$. Let $P\in \mathbb{N}$ be chosen large enough so that $\frac{1}{P}\leq \frac{3}{4}\delta$ and $P>1+\frac{1}{\delta}$.  Let $\epsilon=\frac{\delta}{28P}$ and set $B_{0}=\{Q\in \Delta: 3Q\cap E\neq \emptyset, \sparse(E,7Q)\geq \epsilon \}$. For $N\in\mathbb{N}$ to be chosen momentarily, define $Z$ by
    \begin{equation*}
        Z=\{x\in E: x \text{ is in at least } N>0 \text{ different cubes } 7Q \text{ for } Q\in B_{0}\}.
    \end{equation*}

Then $Z= Z_N \cap E$, where $Z_N$ is the (measurable) set defined in Proposition \ref{prop 6.4}. Using that proposition, we have that
    \begin{equation*}
        \lambda(Z)\leq 7^d\frac{\tilde{K}}{N}
    \end{equation*}
    (with the understanding that this denotes Lebesgue outer measure if $E$ is not measurable).
    
For $N$ large enough, depending only on $\epsilon$ and $d$, we have $\lambda(Z)<\alpha$. Let $G=E\setminus Z$, and $B=\{Q\in B_{0}:3Q\cap G\neq \emptyset\}$. Let $x\in G$. Then $x\notin Z$, so $x$ is in at most $N$ different cubes of $B_{0}$ and hence in at most $N$ different cubes of $B$. By Lemma \ref{lemma 6.6}, we can write $G=F_{1}\cup F_{2}\cup \dots \cup F_{M}$ with the property that if $x,y\in F_{i}$ then a minimal dyadic cube $Q$ with $x,y\in 3Q$ satisfies $Q\notin B$. Here $M=(3^{d}+1)^N$.

At this point, we have written
$$ E = G\cup Z = F_1 \cup \dots \cup F_M \cup Z,$$
where $\lambda(Z)<\alpha$. To complete the proof, it remains to check that each set $F_j$ is $\delta$-well-connected in $E$.

Suppose $x,y\in F_{j}$ are distinct for some $j\in \{1,2,...,M\}$. Let $Q$ be a minimal dyadic cube such that $x,y\in 3Q$. Then $Q\notin B$. Since $x,y\in G\cap 3Q$, we have $G\cap 3Q\neq \emptyset$, from whch it follows that $Q\notin B_0$. This implies that $\sparse(E,7Q)<\epsilon$. Note also that $\side(Q)\leq 2|x-y|$ by the previous lemma.

\begin{figure}{$3Q$}
    \begin{tikzpicture}
    \newcommand\Square[1]{+(-#1,-#1) rectangle +(#1,#1)}
    \draw (2,2) \Square{60pt};
    \filldraw (0,0.5) circle[radius=2.5pt];
    \node at (0.3,0.5){$x$};
    
    \filldraw (3.6,3.2) circle[radius=2.5pt];
    \node at (3.9,3.2){$y$};
    
    \draw [black] plot [only marks, mark size=2.5, mark=*] coordinates {(1,2.5) (2,1) (4,2) (3,1.2) (2,3.8)};

    \draw [yellow] plot [only marks, mark size = 2.5, mark = *] coordinates {(0.8,0.8) (1.3,2.1) (1.7, 1.6) (2.4,2.1) (2.1,2.4) (3.1,2.7) (1,1.5)}; 
    
    \draw (0,0.5) -- node[left=40pt][left = 14pt]{ } (3.6,3.2);
\end{tikzpicture}
\caption{An illustration of the chain $\{z_i\}$, highlighted in yellow, built by choosing points of $E$ near the line segment from $x$ to $y$.}\label{fig:pic}
\end{figure}

    Let $\gamma:[0,|x-y|]\rightarrow 3Q$ parametrize the line segment from $x$ to $y$. Let $x_{i}=\gamma(\frac{i}{P}|x-y|)$ for $i\in \{0,1,2,...P\}$. Note that $x_{0}=x$ and $x_{P}=y$. For each $i\in \{1,2,...,P-1\}$, the fact that $\sparse(E,7Q)<\epsilon$ allows us to choose $z_{i}\in E$ such that $|z_{i}-x_{i}|< \epsilon\cdot \side(7Q)$. We also set $z_0=x$ and $z_P=y$, which are in $F_j\subseteq G\subseteq E$ by assumption. Thus, we have a discrete path $\{z_{0}=x,z_{1},z_{2},...,z_{P-1},z_{P}=y\}\subset E$. Figure \ref{fig:pic} illustrates the construction of this path.

We now verify that this path has the desired properties to make $F_j$ $\delta$-well-connected. First, we check, using our choices of $\epsilon$ and $P$, that each ``step size'' is small.

\begin{align}
|z_{i+1}-z_{i}|&\leq |z_{i+1}-x_{i+1}|+|x_{i+1}-x_{i}|+|x_{i}-z_{i}|\nonumber\\
&<2\epsilon \side(7Q) +\frac{1}{P}|x-y|\nonumber\\
&\leq 28\epsilon|x-y| + \frac{1}{P}|x-y|\label{eq:stepbound}\\
&\leq \left(\frac{\delta+1}{P}\right)|x-y|\nonumber\\
&\leq \delta|x-y|\nonumber.
\end{align}

Next, we check that the total length of this path is small. Reusing inequality \eqref{eq:stepbound} from the previous calculation and our definitions of $\epsilon$ and $P$, we obtain that
\begin{align*}
 \sum_{i=0}^{P-1}|z_{i+1}-z_{i}| &\leq  \sum_{i=0}^{P-1}\left(28\epsilon + \frac{1}{P}\right)|x-y|\\
 &= P\left(28\epsilon + \frac{1}{P}\right)|x-y|\\
 &\leq (\delta+1)|x-y|
\end{align*}

Thus, each $F_{j}$ is $\delta$-well-connected in $E$, and this completes the argument.
\end{proof}

\printbibliography

\end{document}